\numberwithin{equation}{section}
\newcommand{\field}[1]{\mathbb{{#1}}}
\newcommand{\R}{\field{R}}
\newcommand{\N}{\field{{N}}}
\newcommand{\Q}{\field{{Q}}}
\newcommand{\K}{\field{{K}}}
\newcommand{\disc}{\Delta}
\newcommand{\Chi}{{\raisebox{\depth}{$\chi$}}}
\newcommand{\mltc}{\multicolumn}
\renewcommand{\d}{\,\mathrm{d}}
\renewcommand{\Im}{\mathrm{Im}}
\renewcommand{\Re}{\mathrm{Re}}
\renewcommand{\P}{\mathfrak{p}}
\newcommand{\Norm}{\textrm{\upshape N}}
\DeclareMathOperator{\atan}{atan}
\newtheorem{theorem}{Theorem}[section]
\newtheorem{lemma}[theorem]{Lemma}
\newtheorem*{lemma*}{Lemma}
\newtheorem{corollary}[theorem]{Corollary}
\theoremstyle{remark}
\newtheorem*{remark*}{Remark}
\begin{document}
\title[Explicit prime ideal theorem]{Explicit versions of the prime ideal theorem for Dedekind zeta functions under GRH}

\author[L.~Greni\'{e}]{Lo\"{i}c Greni\'{e}}
\address[L.~Greni\'{e}]{Dipartimento di Ingegneria\\
         Universit\`{a} di Bergamo\\
         viale Marconi 5\\
         I-24044 Dalmine
         Italy}
\email{loic.grenie@unibg.it}

\author[G.~Molteni]{Giuseppe Molteni}
\address[G.~Molteni]{Dipartimento di Matematica\\
         Universit\`{a} di Milano\\
         via Saldini 50\\
         I-20133 Milano\\
         Italy}
\email{giuseppe.molteni1@unimi.it}

\keywords{} \subjclass[2010]{Primary 11R42, Secondary 11Y70}


\begin{abstract}
Let $\psi_\K$ be the Chebyshev function of a number field $\K$. Under the Generalized Riemann Hypothesis
we prove an explicit upper bound for $|\psi_\K(x)-x|$ in terms of the degree and the discriminant of
$\K$. The new bound improves significantly on previous known results.
\end{abstract}

\maketitle

\begin{center}
To appear in Math. Comp. 2015.
\end{center}

\section{Introduction}\label{sec:A1}
For a number field $\K$ we denote
\begin{itemize}
\item[] $n_\K$ its dimension,
\item[] $\disc_\K$ the absolute value of its discriminant,
\item[] $r_1$ the number of its real places,
\item[] $r_2$ the number of its imaginary places,
\item[] $d_\K := r_1+r_2-1$.
\end{itemize}
Moreover, throughout this paper $\P$ denotes a nonzero prime ideal of the integer ring $\mathcal{O}_\K$
and $\Norm\P$ its absolute norm. The von Mangoldt function $\Lambda_\K$ is defined on the set of ideals
of $\mathcal{O}_\K$ as $\Lambda_\K(\mathfrak{I}) := \log\Norm\P$ if $\mathfrak{I}=\P^m$ for some $\P$ and
$m\in\N_{>0}$, and is zero otherwise. Moreover, the function $\pi_\K$ and the Chebyshev function
$\psi_\K$ are defined as
\[
\pi_\K(x)  := \sharp \{\P\colon \Norm\P \leq x\}
\]
and
\[
\psi_\K(x)
:= \sum_{\substack{\mathfrak{I}\subset \mathcal{O}_\K\\ \Norm\mathfrak{I} \leq x}} \Lambda_\K(\mathfrak{I})
 = \sum_{\substack{\P,\,m\\ \Norm\P^m \leq x}} \log \Norm\P.
\]
The original prime number theorem states that
\[
\pi_\Q(x) \sim \frac{x}{\log x}
\qquad \text{as $x\to \infty$}
\]
and was independently proved in~1896 by Ha\-da\-mard and de la Vall\'{e}e--Poussin, both following the ideas
of Riemann. By the work of Chebyshev this claim is equivalent to
\[
\psi_\Q(x)\sim x
\qquad \text{as $x\to \infty$.}
\]
The remainder in these asymptotic behaviors is strictly controlled by the distribution of the nontrivial
zeros of the Riemann zeta function. This was first suggested by Riemann himself, and then confirmed by de
la Vall\'{e}e--Poussin in 1899, when he deduced the now standard estimate for the remainder from the
classical zero free region for the Riemann zeta function. Actually, the Riemann Hypothesis
\[
\zeta(s) \neq 0
\qquad
\forall\,\Re(s)>1/2
\]
is equivalent to the statements
\[
\Big|\pi_\Q(x) - \int_2^{x} \frac{\d u}{\log u}\Big| \ll \sqrt{x}\log x
\]
and
\[
|\psi_\Q(x) - x| \ll \sqrt{x}\log^2 x,
\]
as proved by von Koch in the first years of the twentieth century. A quantitative version of the von Koch
result was proved by Schoenfeld~\cite{Schoenfeld1} in 1976: as a consequence of his previous work in
collaboration with Rosser~\cite{RosserSchoenfeld2} he showed that
\begin{equation}\label{eq:E1}
|\psi_\Q(x) - x| \leq \frac{1}{8\pi}\sqrt{x}\log^2 x
\qquad
\forall\,x\geq 73.2.
\end{equation}
The arguments of Hadamard and de la Vall\'{e}e--Poussin were quickly adapted by Landau to prove analogous
results for a generic number field $\K$, and in 1977 Lagarias and Odlyzko~\cite{LagariasOdlyzko} modified
the argument to explore the dependence of the remainder with respect to the parameters $\disc_\K$ and
$n_\K$. As a part of a more general result on Chebotarev's theorem, they proved that if $\zeta_\K$
satisfies the Generalized Riemann Hypothesis
\[
\zeta_\K(s) \neq 0
\qquad
\forall\,\Re(s)>1/2,
\]
then
\[
|\psi_\K(x) - x| \ll \sqrt{x}[\log x\log\disc_\K + n_\K\log^2 x],
\]
where the implicit constant is independent of $\K$.
%
Oesterl\'{e} repeated their argument, aiming to produce an explicit value of the absolute constants involved,
and he proved that
\begin{equation}\label{eq:E2}
|\psi_\K(x) - x| \leq \sqrt{x}
                              \Big[
                                   \Big(\frac{\log   x}{ \pi}+2\Big)\log\disc_\K
                                  +\Big(\frac{\log^2 x}{2\pi}+2\Big)n_\K
                              \Big]
\qquad
\forall\,x\geq 1
\end{equation}
under GRH. This result was announced in~\cite{Oesterle}, but unfortunately its proof has never appeared.
Very recently Winckler~\cite[Th.~8.1]{Winckler} has also produced an explicit version of Lagarias and
Odlyzko's work, and proved a result similar to~\eqref{eq:E2}, but with $\frac{23}{3}$ and $\frac{863}{31}$
as coefficients of logs in the $\log\disc_\K$ and $n_\K$ parts, respectively.
\smallskip\\
In this paper we combine a new method to estimate convergent sums on zeros (see Lemma~\ref{lem:E3}), a
very recent result of Trudgian~\cite{TrudgianIII} on the number of zeros in the critical strip and up to
$\pm T$, and an idea of Goldston~\cite{Goldston3}, to deduce the following general result.
\begin{theorem}\label{th:E1}
(GRH)
For every $x\geq 3$ and $T\geq 5$ we have:
\begin{equation}\label{eq:E3}
|\psi_\K(x)-x|\leq
      F(x,T)\log\disc_\K
    + G(x,T)n_\K
    + H(x,T)
\end{equation}
with
\begin{align}
F(x,T) &= \frac{\sqrt{x}}{\pi}
          \Big[\log\Big(\frac{T}{2\pi}\Big)
               + 6.01
               + \frac{5.84}{T}
               + \frac{5.52}{T^2}
          \Big]
          + 1.02,                                                                           \label{eq:E4}\\[.3cm]
G(x,T) &= \frac{\sqrt{x}}{\pi}
          \Big[\frac{1}{2}\log^2\Big(\frac{T}{2\pi}\Big)
               + \Big(2 + \frac{5.84}{T} + \frac{5.52}{T^2}\Big)\log\Big(\frac{T}{2\pi}\Big)
               - 1.41
               + \frac{29.04}{T}
               + \frac{31.46}{T^2}
          \Big]
          - 2.10,                                                                           \notag\\[.3cm]
H(x,T) &= \frac{x}{T}
          + \frac{\sqrt{x}}{\pi}\Big[25.57 + \frac{25.97}{T} + \frac{28.57}{T^2}\Big]
          + \epsilon_\K(x,T)
          + 8.35
          + 1.22\frac{\delta_{n_\K\leq 2}}{x}                                               \notag,
\end{align}
where $\epsilon_\K(x,T) := \max\big(0,d_\K\log x - 1.44n_\K\frac{\sqrt{x}}{T}\big)$ and $\delta$ is the
Kronecker symbol.
\end{theorem}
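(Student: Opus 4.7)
The plan is to begin with a truncated Perron-style explicit formula of the shape
\[
\psi_\K(x)=x-\sum_{|\Im\rho|\le T}\frac{x^\rho}{\rho}+E_\K(x,T),
\]
where the sum runs over the nontrivial zeros of $\zeta_\K$ and $E_\K(x,T)$ collects the error from the Perron truncation, the contribution of the zero of $\zeta_\K$ at $s=0$ (of multiplicity $d_\K=r_1+r_2-1$, producing a term of size $d_\K\log x$), the constant $\log\disc_\K$ coming from the functional equation, and the series over the trivial zeros at negative integers. Under GRH every nontrivial zero has the form $\rho=1/2+i\gamma$, so $|x^\rho/\rho|\le\sqrt{x}/|\rho|$, and the analysis reduces to an explicit estimate for $\sum_{|\gamma|\le T}1/|\rho|$ together with a precise accounting of $E_\K(x,T)$.

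For the zero sum I would use Trudgian's recent explicit form of the Riemann--von Mangoldt counting function $N_\K(T)$, whose leading density is $\frac{1}{\pi}\bigl(\log\disc_\K+n_\K\log(T/2\pi)\bigr)$, and integrate by parts against $1/\sqrt{1/4+t^2}$. The leading integral produces
\[
\sum_{0<|\gamma|\le T}\frac{1}{|\rho|}\approx\frac{\log\disc_\K}{\pi}\log\frac{T}{2\pi}+\frac{n_\K}{2\pi}\log^2\frac{T}{2\pi},
\]
matching exactly the main coefficients of $F(x,T)$ and $G(x,T)$ in~\eqref{eq:E4}. The small-$\gamma$ portion of the sum, together with the integrated error terms from Trudgian's bound, is controlled through Lemma~\ref{lem:E3}, whose purpose is precisely to estimate such a convergent sum with sharp constants; this is what produces the numerical coefficients $6.01$, $5.84/T$ and $5.52/T^2$ inside $F$ and their analogues in $G$.

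Next I would deploy Goldston's averaging idea to deal with the Perron truncation error. A direct application of Perron's formula gives a truncation error of size $x(\log x)(\log\disc_\K+n_\K\log T)/T$, which is too large. Goldston's observation is that by averaging the truncation point $T$ over a short interval one absorbs the spurious $\log x$ factor into the zero sum already estimated above, leaving a clean $x/T$ as the first term in $H(x,T)$, together with a known negative increment of order $n_\K\sqrt{x}/T$. Subtracting this increment from the $d_\K\log x$ produced by the residue at $s=0$ gives precisely $\epsilon_\K(x,T)=\max(0,\,d_\K\log x-1.44\,n_\K\sqrt{x}/T)$, capped at zero since we are bounding an absolute value. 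The remaining additive constants ($8.35$ and the tiny $1.22\,\delta_{n_\K\le 2}/x$) come from explicit evaluation of $-\zeta_\K'/\zeta_\K$ at $s=0$ and from the series over trivial zeros, the special case $n_\K\le 2$ reflecting a cancellation in that series that occurs only in low degree.

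The hardest step will be the joint numerical optimisation. Each ingredient — the Perron cut-off, Trudgian's explicit error constants, the split into small and large zeros for Lemma~\ref{lem:E3}, and Goldston's averaging window — contributes its own secondary constants, and these must be calibrated simultaneously so that the leading coefficients $1/\pi$ and $1/(2\pi)$ in $F$ and $G$ are preserved exactly while $\epsilon_\K(x,T)$ remains non-negative without forcing $G$ or $H$ upwards. The improvement over~\eqref{eq:E2} — essentially halving the coefficient of $\sqrt{x}\log x\log\disc_\K$ when $T$ is chosen near $\sqrt x$ — rests precisely on the cooperation between Lemma~\ref{lem:E3} and Goldston's trick, and making those two tools compatible with Trudgian's bound is where most of the delicate bookkeeping lives.
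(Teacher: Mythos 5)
Your outline of the zero-sum estimates is essentially the paper's: Trudgian's bound for $N_\K(T)$ integrated by parts against $(1/4+\gamma^2)^{-1/2}$, with Lemma~\ref{lem:E3} supplying the low-lying zeros, does produce the leading terms $\frac{1}{\pi}\log\frac{T}{2\pi}\log\disc_\K+\frac{n_\K}{2\pi}\log^2\frac{T}{2\pi}$. The gap is in the core mechanism. The paper does \emph{not} start from a truncated Perron formula for $\psi_\K$ and it does not average over the cut-off $T$; Goldston's idea, as used here, is the sandwich $\frac{1}{h}\big(\psi^{(1)}_\K(x+h)-\psi^{(1)}_\K(x)\big)$ from the positivity of $\Lambda_\K$, applied to the \emph{exact}, absolutely convergent explicit formula for the integrated function $\psi^{(1)}_\K(x)=\frac{x^2}{2}-\sum_\rho\frac{x^{\rho+1}}{\rho(\rho+1)}-xr_\K+r'_\K+R_{r_1,r_2}(x)$, so there is no truncation error to absorb at all. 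The term $\frac{x}{T}$ in $H(x,T)$ is simply $\frac{h}{2}$ with the choice $h=\pm\frac{2x}{T}$; the split at height $T$ happens inside the convergent zero sum, with the tail controlled by $\sum_{|\gamma|\ge T}|\rho|^{-2}$ and the low part by the Littlewood-type Lemma~\ref{lem:E1} ($|w_\rho|\le\frac12$, resp.\ $\frac12+\frac{|h|}{6x}$) times $N_\K(T)$. Your claim that ``averaging the truncation point $T$ over a short interval \ldots leaves a clean $x/T$'' is asserted, not proved: the classical truncated formula for $\psi_\K$ carries errors of the shape $\frac{x}{T}\log x\,(\log\disc_\K+n_\K\log T)$ plus a term sensitive to the distance from $x$ to prime powers, and averaging in $T$ alone does not reduce these to $\frac{x}{T}$ with coefficient $1$ while keeping the coefficient of $W_\K(T)$ near $2$; if it did, the whole detour through $\psi^{(1)}_\K$ would be pointless.

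Two further points do not survive as described. First, $\epsilon_\K(x,T)$ in the paper is produced by the \emph{asymmetry} between the $h>0$ and $h<0$ estimates: the one-sided bound $-d_\K\log x\le R'_{r_1,r_2}$ of Lemma~\ref{lem:E2} enters only the lower bound, and it is offset by the difference of the two error majorants, whose size is shown to be at least $1.44\pi n_\K/T$ times $\frac{\sqrt x}{\pi}$. Your symmetric truncated-formula setup offers no mechanism generating a ``known negative increment of order $n_\K\sqrt{x}/T$''; you would be left with the full $d_\K\log x$, i.e.\ a weaker theorem. Second, the constants $1.02$, $-2.10$ and $8.35$ do not come from the zero sums or from evaluating $-\zeta_\K'/\zeta_\K$ ad hoc: they are the explicit bound $|r_\K|\le1.02\log\disc_\K-2.10\,n_\K+8.35$ imported from the authors' earlier paper, and the $1.22\,\delta_{n_\K\le2}/x$ is the upper bound of Lemma~\ref{lem:E2}. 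Without the smoothed explicit formula, Lemma~\ref{lem:E1}, the choice $h=\pm2x/T$, and the asymmetry argument, the stated values of $F$, $G$, $H$ and in particular the precise form of $\epsilon_\K(x,T)$ cannot be reached along the route you propose.
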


\noindent Setting $T$ to a constant one gets a bound of Chebyshev kind, with a main term independent of
the parameters of the field; as a consequence the resulting bound is very strong when $x$ is small with
respect to the degree or the discriminant.\\
Setting $T= x/6$ one gets~\eqref{eq:E2}, for $x\geq 105$ for any non-rational field. By taking $T=8$, the
range can be extended for $x\in[20,105]$: it follows immediately for $n_\K=2$ and $\disc_\K\geq 767842$,
$n_\K=3$ and $\disc_\K\geq 5700$ or $n_\K\geq 4$; the remaining cases for quadratic and cubic fields can
be checked by explicit computations.\\
Comparing the main increasing term $\sqrt{x}\log^2\big(\frac{T}{2\pi}\big)$ with the main decreasing term
$\frac{x}{T}$ we are led to use $T(x) = c\frac{\sqrt{x}}{\log x}$ for suitable values of $c$. In fact,
combining different choices for $c$ we get the following result, which improves significantly
on~\eqref{eq:E2}.
\begin{corollary}\label{cor:E1}
(GRH) Suppose $x\geq 100$. Then
\begin{equation}\label{eq:E5}
|\psi_\K(x)-x| \leq \sqrt{x}\Big[\Big(\frac{\log   x}{2\pi} + 2\Big)\log\disc_\K
                               + \Big(\frac{\log^2 x}{8\pi} + 2\Big)n_\K
                            \Big].
\end{equation}
\end{corollary}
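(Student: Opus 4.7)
Following the heuristic stated just before the corollary, the plan is to apply Theorem~\ref{th:E1} with $T=c\sqrt{x}/\log x$ for a suitably chosen constant $c$ (possibly different on different sub\nobreakdash-ranges of $x\ge 100$). With this substitution one has
\[
\log\frac{T}{2\pi}=\frac{\log x}{2}-\log\log x+\log\frac{c}{2\pi},
\]
so $F(x,T)$ acquires the leading term $\sqrt{x}\log x/(2\pi)$ and the quadratic part of $G(x,T)$ acquires the leading term $\sqrt{x}\log^{2}x/(8\pi)$, matching exactly the principal terms of~\eqref{eq:E5}. The remaining task is to show that all the subleading pieces of $F$, $G$ and $H$ can be packed into the secondary slack $2\sqrt{x}\log\disc_{\K}+2\sqrt{x}n_{\K}$ that~\eqref{eq:E5} provides.

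After subtracting the leading term, the excess in $F(x,T)$ equals
\[
\frac{\sqrt{x}}{\pi}\Big[6.01-\log\log x+\log\frac{c}{2\pi}+\frac{5.84}{T}+\frac{5.52}{T^{2}}\Big]+1.02,
\]
and one must show this is at most $2\sqrt{x}$. The favourable contribution $-\sqrt{x}\log\log x/\pi$ (already about $0.49\sqrt{x}$ at $x=100$ and monotonically growing) absorbs the constant $6.01/\pi$ and the tiny $1/T$\nobreakdash-tails, as long as $c$ is not taken too large. An analogous expansion for $G(x,T)$ produces a favourable cross term $-\sqrt{x}\log x\,\log\log x/(2\pi)$ that dominates the $O(\sqrt{x}\log x)$ corrections coming from the linear\nobreakdash-in\nobreakdash-$\log(T/2\pi)$ part of $G$. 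The one dangerous summand is $x/T=\sqrt{x}\log x/c$ in $H$, since it carries no factor of $n_{\K}$ or $\log\disc_{\K}$; it is absorbed into the $2\sqrt{x}n_{\K}$ margin using $n_{\K}\ge 1$, provided $c$ is chosen large enough. The error $\epsilon_{\K}(x,T)$ becomes $\log x\cdot\max(0,d_{\K}-1.44\,n_{\K}/c)\le n_{\K}\log x\le 2\sqrt{x}\,n_{\K}$ on $x\ge 100$, and the residual constants $8.35$ and $1.22/x$ in $H$ are handled by $2\sqrt{x}\,n_{\K}\ge 20$.

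The main obstacle is that the two constraints on $c$ point in opposite directions: the $F$\nobreakdash-bound wants $c$ small (so that $\log(c/2\pi)$ does not overwhelm the $-\log\log x$ gain), while the $x/T$ term wants $c$ large; neither single value works cleanly on the whole of $[100,\infty)$. This is exactly why the authors announce ``combining different choices for~$c$'': I would partition $[100,\infty)$ into a few sub\nobreakdash-intervals and use a value of $c$ tuned to each, taking in the end, at each $x$, the choice that certifies~\eqref{eq:E5}. Any tiny residual range of $x$ that resists this asymptotic balancing can be verified by direct numerical substitution into Theorem~\ref{th:E1}, in the same spirit as the authors' own remarks immediately after the theorem. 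Once this numerical bookkeeping is organised, the proof reduces to a routine (if lengthy) verification of the inequalities sketched above.
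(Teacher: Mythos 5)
Your starting point --- substituting $T=c\sqrt{x}/\log x$ into Theorem~\ref{th:E1} and matching the leading terms $\frac{\sqrt{x}\log x}{2\pi}$ and $\frac{\sqrt{x}\log^2x}{8\pi}$ --- is indeed how the paper begins, and the reduction to showing that the deficit in~\eqref{eq:E30}, divided by $\sqrt{x}$, is increasing in $x$ is also in the paper. But two of your absorption steps do not work as stated. The term $x/T=\sqrt{x}\log x/c$ cannot be absorbed into the fixed margin $2\sqrt{x}\,n_\K$ for any constant $c$: it exceeds that margin as soon as $\log x>2cn_\K$, so no ``large enough $c$'' exists uniformly in $x$; in the actual argument this term is dominated by the growing favourable contributions (the $\log\log x$ gain in $F$ and the cross term $\tfrac{\log x\log\log x}{2\pi}n_\K$ in $G$), and the monotonicity-in-$x$ device is what lets one check positivity at a single point rather than on a whole range.

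The more serious gap is your final claim that ``any tiny residual range of $x$ can be verified by direct numerical substitution into Theorem~\ref{th:E1}.'' For fields of small degree and small discriminant the bound of Theorem~\ref{th:E1} is genuinely larger than~\eqref{eq:E5} for all moderate $x$, whatever $T$ is chosen: already the contribution $25.57\sqrt{x}/\pi\approx 8.1\sqrt{x}$ inside $H(x,T)$, together with the constants in $F$ and $G$, exceeds the entire slack $2\sqrt{x}(\log\disc_\K+n_\K)$ when, say, $n_\K=2$ and $\disc_\K=5$, so \eqref{eq:E30} simply fails near $x=100$ for such fields and no choice of $c$ rescues it. The paper's proof therefore contains two ingredients absent from your proposal: (i) for $n_\K\ge 5$ it injects GRH discriminant lower bounds of Odlyzko type ($\log\disc_\K\ge n_\K\log 11.916-5.8507$ for $n_\K\ge 8$, minimal discriminants for degrees $5$--$7$) and uses positivity of the coefficient of $n_\K$ in~\eqref{eq:E30} to reduce to the smallest degree; (ii) for $2\le n_\K\le 4$ it computes thresholds $\overline{\disc_\K}$ (about $1.7\cdot 10^{8}$ for real quadratic fields, see Table~\ref{tab:E1}), enumerates the tens of millions of fields below them, proves~\eqref{eq:E30} for these only for $x\ge\bar{x}$ with $\bar{x}$ as large as $1566020$, and on $[100,\bar{x}]$ verifies~\eqref{eq:E5} by computing $\psi_\K(x)$ directly, field by field (roughly 40 hours of computation). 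This case analysis and large-scale verification --- not a routine balancing of constants, and not a verification ``via the theorem'' --- is the substance of the proof, and without it the corollary is not established on the full range $x\ge 100$. (The case $\K=\Q$ is also needed separately, where the claim follows because it is weaker than~\eqref{eq:E1}.)
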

\noindent The range $x\geq 100$ can be extended for fields of large degree, in particular one has $x\geq
24$ when $n_\K\geq 8$, $x\geq 29$ for $n_\K= 7$, $x\geq 43$ for $n_\K = 6$ and $x\geq 72$ for $n_\K=5$.
Only small improvements are possible for cubic and quadratic fields with this method, and only at the
cost of a very large quantity of numerical computations.
\smallskip\\
A different choice of $c$ yields even better results for large $x$.
\begin{corollary}\label{cor:E2}
(GRH) For every $x\geq 3$, we have
\begin{align*}
|\psi_\K(x)-x|
\leq
      \sqrt{x}\Big[\Big(\frac{1}{2\pi}\log\Big(\frac{18.8\,x}{\log^2 x}\Big)
                        + 2.3
                   \Big)\log\disc_\K
                  + \Big(\frac{1}{8\pi}\log^2\Big(\frac{18.8\,x}{\log^2 x}\Big)
                          + 1.3
                     \Big)n_\K
                  + 0.3\log x
                  + 14.6
              \Big].
\end{align*}
Moreover, if $x\geq 2000$, then
\begin{align*}
|\psi_\K(x)-x|
\leq
      \sqrt{x}\Big[\Big(\frac{1}{2\pi}\log\Big(\frac{x}{\log^2 x}\Big)
                        + 1.8
                   \Big)\log\disc_\K
                  +\Big(\frac{1}{8\pi}\log^2\Big(\frac{x}{\log^2 x}\Big)
                  + 1.1
                   \Big)n_\K
                  + 1.2\log x
                  + 10.2
              \Big].
\end{align*}
\end{corollary}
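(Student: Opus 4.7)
The plan is to specialize Theorem~\ref{th:E1} to the family of choices $T=T(x)=c\sqrt{x}/\log x$ already motivated in the paragraph preceding the corollary, and combine two different values of $c$ to obtain the two claimed inequalities. The key algebraic identity is
\[
\log\!\Bigl(\frac{T}{2\pi}\Bigr)=\tfrac{1}{2}\log\!\Bigl(\frac{c^2}{4\pi^2}\cdot\frac{x}{\log^2 x}\Bigr),
\]
so that the main terms $\tfrac{\sqrt{x}}{\pi}\log(T/(2\pi))$ in $F(x,T)$ and $\tfrac{\sqrt{x}}{2\pi}\log^2(T/(2\pi))$ in $G(x,T)$ immediately produce the expressions $\tfrac{\sqrt{x}}{2\pi}\log(\tfrac{c^2 x}{4\pi^2\log^2 x})$ and $\tfrac{\sqrt{x}}{8\pi}\log^2(\tfrac{c^2 x}{4\pi^2\log^2 x})$ that appear as the leading pieces of the $\log\disc_\K$ and $n_\K$ coefficients. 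For the first bound, I would take $c$ with $c^2/(4\pi^2)=18.8$ (i.e.\ $c\approx 27.24$); for the second, $c^2/(4\pi^2)=1$, i.e.\ $c=2\pi$. The choices immediately give $T\ge 5$ on the asserted $x$-ranges: at $x=3$ one gets $T\approx 42.9$, and at $x=2000$ one gets $T\approx 36.9$, both well above the hypothesis of Theorem~\ref{th:E1}.

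Next I would substitute $T=c\sqrt{x}/\log x$ into $F$, $G$, and $H$ and read off the claimed structure. The term $x/T$ in $H(x,T)$ equals $\sqrt{x}\log x/c$, which is the origin of the $0.3\log x$ (resp.\ $1.2\log x$) piece: $1/c$ must be at most $0.3$ (resp.\ at most $1.2$), which is easily satisfied. The term $\epsilon_\K(x,T)=\max(0,d_\K\log x - 1.44 n_\K\log x/c)$ I would handle by using $d_\K\le n_\K$ to write $\epsilon_\K\le n_\K\log x$ and absorb it into the $n_\K$-coefficient, adding at most $\log x/\sqrt{x}$ to the effective coefficient $B(x)$. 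Since $\log x/\sqrt{x}$ is decreasing for $x\ge e^2$ and never exceeds $\log 3/\sqrt{3}\approx 0.63$ on $[3,\infty)$ (resp.\ $\log 2000/\sqrt{2000}\approx 0.17$ on $[2000,\infty)$), this adds only a small fixed amount that can be swallowed by the slack constant $1.3$ (resp.\ $1.1$) after accounting for the positive $\log^2$ piece already present in $B(x)$.

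The remaining work is to check that all the lower-order constants — $6.01/\pi$, $5.84/(\pi T)$, $5.52/(\pi T^2)$, $1.02/\sqrt{x}$ and the analogous terms in $G$ and $H$ — together with any logarithmic correction coming from the chosen value of $c$, fit within the slack constants $2.3,1.3,14.6$ (resp.\ $1.8,1.1,10.2$) uniformly on the relevant range of $x$. The obstacle here is purely numerical calibration: the lower-order terms are largest at the left endpoint ($x=3$ or $x=2000$), whereas the leading behaviour is dictated by large $x$, so the bound is tightest at the boundary and one may have to partition the range into a few sub-intervals and use slightly different values of $c$ on each (this is what the phrase \emph{``combining different choices for $c$''} refers to), taking the envelope as the final uniform estimate. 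Once $c$ (or the finite list of $c$'s) has been chosen, the verification reduces to bounding a finite collection of explicit one-variable functions, all of which are monotone in the residual variables $1/T$ and $1/\sqrt{x}$; the estimates are then elementary but tedious, and they exploit the slack that was deliberately built into the constants of Theorem~\ref{th:E1}.
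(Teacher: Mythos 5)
Your overall strategy (specialize Theorem~\ref{th:E1} with $T=c\sqrt{x}/\log x$ and read off the constants) is indeed the paper's, but your calibration of $c$ is wrong in a way that breaks the derivation. You match $c^2/(4\pi^2)$ directly to the displayed constants ($18.8$, resp.\ $1$), which accounts for the $\frac12\log^2\big(\frac{T}{2\pi}\big)$ term in $G(x,T)$ but ignores the linear term $\big(2+\frac{5.84}{T}+\frac{5.52}{T^2}\big)\log\big(\frac{T}{2\pi}\big)$. With your $c\approx 27.2$ this leaves inside the $n_\K$-coefficient an unabsorbed quantity of size about $\frac1\pi\log\big(\frac{18.8\,x}{\log^2x}\big)$, which is unbounded in $x$ and cannot be hidden in the constant $1.3$; nor can it be pushed into the free term, since the slack there is only about $\big(0.3-\frac1c\big)\sqrt{x}\log x\approx 0.26\,\sqrt{x}\log x$ while the deficit is at least $\frac{n_\K}{\pi}\sqrt{x}\,(\log x-2\log\log x+\log 18.8)$ and $1/\pi>0.26$, so the comparison fails for every $n_\K\geq1$ once $x$ is large. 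For the second bound with $c=2\pi$ the same problem occurs, and in addition even the $\log\disc_\K$-coefficient fails, because $\frac{6.01}{\pi}\approx1.91>1.8$ and for fields of large discriminant there is nothing else to absorb that excess into.

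The paper's proof fixes exactly this point by completing the square: with $u=T/(2\pi)$ one has $\frac12\log^2 u+2\log u=\frac12\log^2(e^2u)-2$, and correspondingly $\log u+6.01=\frac12\log\big((e^2u)^2\big)+4.01$ in $F(x,T)$. This forces the much smaller choices $T=\frac{10}{e}\frac{\sqrt{x}}{\log x}$, for which $\big(\frac{e^2T}{2\pi}\big)^2=\frac{25e^2}{\pi^2}\frac{x}{\log^2x}\leq\frac{18.8\,x}{\log^2x}$ and $T\geq5$ holds for all $x\geq3$ (indeed $\min_{x\ge3}\sqrt{x}/\log x=e/2$, attained at $x=e^2$), and $T=\frac{2\pi}{e^2}\frac{\sqrt{x}}{\log x}$, for which $\frac{e^2T}{2\pi}=\frac{\sqrt{x}}{\log x}$; there the condition $T\geq5$ is precisely what imposes $x\geq2000$, and $x/T=\frac{e^2}{2\pi}\sqrt{x}\log x\approx1.18\,\sqrt{x}\log x$ is the true source of the $1.2\log x$ term, so there is essentially no free-term slack of the kind your argument relies on. Your treatment of $\epsilon_\K$ and of the $O(1/T)$ corrections is reasonable in spirit, but it presupposes slack that your values of $c$ do not actually leave; with the paper's values of $c$ the residual constants do fit, and that numerical check is the only ``tedious'' part that remains.
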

\noindent The first bound is stronger than~\eqref{eq:E2} for $x\geq 1700$ if $\K\neq \Q$ (but $x\geq 280$
suffices when $n_\K\geq 3$ and $x\geq 115$ when $n_\K\geq 4$), and stronger than~\eqref{eq:E5} for $x\geq
1.4\cdot 10^{16}$ (but $x\geq 5.6\cdot 10^{10}$ suffices when $n_\K\geq 3$ and $x\geq 2.2\cdot 10^{8}$
when $n_\K\geq 4$).\\
The second bound is always stronger than~\eqref{eq:E2} when $\K\neq \Q$ and stronger than~\eqref{eq:E5}
for $x\geq 1.4\cdot 10^{32}$ (but $x\geq 9.3\cdot 10^{10}$ suffices when $n_\K\geq 3$ and $x\geq 6.3\cdot
10^5$ when $n_\K\geq 4$; the bad behavior for quadratic fields comes from the term $1.2\log x$). It is
also stronger than~\eqref{eq:E1}, but only for extremely large $x$ (actually $x\geq 3\cdot 10^{871}$).
This is a consequence of the fact that our computations have not been optimized for $\Q$: actually this
is possible in several steps and we believe that doing so the method should produce a better bound.
\medskip

\noindent
From Corollary~\ref{cor:E2} one quickly deduces the following explicit bound for the remainder of the
$\pi_\K(x)$ function.
\begin{corollary}\label{cor:E3}
(GRH) For $x \geq \bar{x} \geq 3$ we have
\begin{multline*}
\Big|\pi_\K(x)-\pi_\K(\bar{x})- \int_{\bar{x}}^x \frac{\d u}{\log u}\Big|\\
\leq
\sqrt{x}\Big[ \Big(\frac{1}{2\pi}-\frac{\log\log x}{ \pi\log x} + \frac{5.8}{\log x}\Big)\log \disc_\K
             +\Big(\frac{1}{8\pi}-\frac{\log\log x}{2\pi\log x} + \frac{3  }{\log x}\Big)n_\K\log x
             + 0.3
             + \frac{13.3}{\log x}
        \Big].
\end{multline*}
\end{corollary}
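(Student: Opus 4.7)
The plan is to pass from the $\psi_\K$-bound of Corollary~\ref{cor:E2} to the stated $\pi_\K$-bound by the classical Abel/Stieltjes partial-summation argument.

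Setting $\theta_\K(u):=\sum_{\Norm\P\leq u}\log\Norm\P$, Stieltjes integration by parts of $\int_{\bar x}^{x}\d\theta_\K(u)/\log u$ against $1/\log u$, combined with $\int_{\bar x}^{x}\d u/\log u=u/\log u\big|_{\bar x}^{x}+\int_{\bar x}^{x}\d u/\log^{2}u$, yields
\[
\pi_\K(x)-\pi_\K(\bar x)-\int_{\bar x}^{x}\frac{\d u}{\log u}
=\frac{E(x)}{\log x}-\frac{E(\bar x)}{\log\bar x}+\int_{\bar x}^{x}\frac{E(u)}{u\log^{2}u}\d u,
\]
with $E(u):=\theta_\K(u)-u$. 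The task reduces to controlling $E$ at the endpoints and uniformly on $[\bar x,x]$. The identity $\psi_\K(u)-\theta_\K(u)=\sum_{m\geq 2}\theta_\K(u^{1/m})$, whose dominant term is $\theta_\K(\sqrt{u})=\sqrt{u}+O(u^{1/4}\log^{2}u)$ under GRH while the tail is $o(\sqrt{u})$, gives
\[
|E(u)|\leq\sqrt{u}\bigl[A(u)\log\disc_\K+B(u)n_\K+C(u)\bigr]+\sqrt{u}+r(u),
\]
where $A,B,C$ are the coefficients of Corollary~\ref{cor:E2} and $r(u)=o(\sqrt{u})$ is controlled explicitly.

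Inserting this pointwise bound into the three pieces of the identity above and expanding
\[
\frac{A(u)}{\log u}=\frac{1}{2\pi}-\frac{\log\log u}{\pi\log u}+\frac{(\log 18.8)/(2\pi)+2.3}{\log u}
\]
(and analogously $B(u)/\log u$) already produces the leading $\frac{1}{2\pi}\log\disc_\K$, the telltale $-\frac{\log\log x}{\pi\log x}\log\disc_\K$ and the matching counterparts in $n_\K$ that appear in the statement. The integrals reduce, via iterated integration by parts, to the key template
\[
\int_{\bar x}^{x}\frac{\d u}{\sqrt{u}\,\log u}=\frac{2\sqrt{x}}{\log x}-\frac{2\sqrt{\bar x}}{\log\bar x}+2\int_{\bar x}^{x}\frac{\d u}{\sqrt{u}\,\log^{2}u},
\]
so every integral contribution is of the same $\sqrt{x}/\log x$ order as the $x$-endpoint and merely enlarges the absolute constants.

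The main obstacle is not conceptual but the tight numerical bookkeeping needed to collapse the constants $2.3$, $1.3$, $0.3\log x$, $14.6$ of Corollary~\ref{cor:E2}, the $\sqrt{u}$ correction from $\psi_\K-\theta_\K$ together with its remainder $r(u)$, and the integral contributions, into the compact $5.8$, $3$, $0.3$, $13.3$ of the corollary. The most delicate point is the $\bar x$-endpoint $E(\bar x)/\log\bar x$: a brute triangle inequality would double the $\frac{1}{2\pi}\log\disc_\K$ coefficient, which is not acceptable. The proof must therefore exploit either the (eventual) monotonicity of $u\mapsto\sqrt{u}\,A(u)/\log u$ on $[3,x]$, so that this term is absorbed into the $x$-endpoint, or a dichotomy on $\bar x/x$, using that when $\bar x$ approaches $x$ the left-hand side itself is already small, so a loose bound suffices.
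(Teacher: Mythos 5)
Your skeleton (Stieltjes partial summation of $\d\vartheta_\K(u)/\log u$, then feeding in the $\psi_\K$-bound of Corollary~\ref{cor:E2} through $\vartheta_\K$) is indeed the paper's, but the point you yourself single out as ``the most delicate'' is a genuine gap, and neither of your two proposed fixes closes it. Monotonicity of $u\mapsto\sqrt{u}\,A(u)/\log u$ only shows that the $\bar x$-endpoint term $|\vartheta_\K(\bar x)-\bar x|/\log\bar x$ is bounded by the corresponding quantity at $x$; it gives comparison, not absorption, since after taking absolute values there is no cancellation left to exploit, and adding the two endpoint terms doubles the leading coefficient from $\frac{1}{2\pi}$ to $\frac{1}{\pi}$ --- exactly the loss you say is unacceptable. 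The dichotomy fails for a different reason: the premise ``when $\bar x$ approaches $x$ the left-hand side itself is already small'' is unsupported. The pointwise bounds you have (Corollary~\ref{cor:E2} applied at $\bar x$ and at $x$) do not prevent $\vartheta_\K(u)-u$ from moving from near its extreme negative value at $\bar x$ to near its extreme positive value at $x$ across a short window; in such a configuration $\pi_\K(x)-\pi_\K(\bar x)-\int_{\bar x}^x\frac{\d u}{\log u}$ would be of order $\frac{\sqrt{x}}{\pi}\log\disc_\K$ even with $x-\bar x=o(x)$, so no ``loose bound'' is available in that regime. Since the $x$-endpoint alone already uses up essentially the whole coefficient $\frac{1}{2\pi}-\frac{\log\log x}{\pi\log x}+O(1/\log x)$ and the integral $\int_{\bar x}^x|\vartheta_\K(u)-u|\,u^{-1}\log^{-2}u\,\d u$ costs a further amount of order $\frac{\sqrt x}{\pi\log x}$, any argument that charges the $\bar x$-endpoint at full strength cannot reach the stated constants.

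For comparison, the paper never retains that endpoint: after writing the left-hand side as $\int_{\bar x}^x\d(\vartheta_\K(u)-u)/\log u$ it passes directly, in~\eqref{eq:E31}, to the bound $\frac{|\vartheta_\K(x)-x|}{\log x}+\int_{\bar x}^x\frac{|\vartheta_\K(u)-u|}{u\log^2u}\,\d u$, in which only the $x$-endpoint and the integral appear; your plan contains no substitute for this device, and that is the missing idea. A secondary, less serious divergence: for $\psi_\K-\vartheta_\K$ the paper uses the one-line estimate $|\psi_\K(x)-\vartheta_\K(x)|\le n_\K|\psi_\Q(x)-\vartheta_\Q(x)|\le 1.43\,n_\K\sqrt{x}$ (Rosser--Schoenfeld), whereas your expansion $\psi_\K(u)-\vartheta_\K(u)=\sum_{m\ge2}\vartheta_\K(u^{1/m})$ quotes the rational-field error $O(u^{1/4}\log^2u)$ for the leading term; for a number field that error carries $\log\disc_\K$ and $n_\K$, must be made explicit, and Corollary~\ref{cor:E2} cannot even be invoked at $\sqrt{u}$ when $3\le u<9$. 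This second point is repairable with more bookkeeping; the endpoint issue, with the tools you propose, is not.
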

\smallskip

We have made available at the address:\\
{
\url{http://users.mat.unimi.it/users/molteni/research/psi_GRH/psi_GRH_data.gp}\\
}
a file containing the PARI/GP~\cite{PARI2} code we have used to compute the constants in this article.
\medskip\\
{\bf Acknowledgments.} We wish to thank Alberto Perelli for his valuable remarks and comments, and
Michael Rubinstein who provided us the necessary zeros for a lot of Dirichlet $L$-functions we have used
to check the strength of our Lemma~\ref{lem:E3}. A special thank you goes to Timothy S. Trudgian, who
helped us with illuminating discussions on his work. At last, we thank the referee for her/his
suggestions.

\section{Preliminary inequalities}\label{sec:A2}
For $\Re(s) > 1$ we have
\[
-\frac{\zeta_\K'}{\zeta_\K}(s)
= \sum_\P  \sum_{m=1}^\infty \log(\Norm\P)(\Norm\P)^{-ms},
\]
which in terms of standard Dirichlet series reads
\[
-\frac{\zeta_\K'}{\zeta_\K}(s)
= \sum_{n=1}^{\infty} \tilde{\Lambda}_\K(n)n^{-s},
\quad
\text{with}
\quad
\tilde{\Lambda}_\K(n)
:=
\begin{cases}
\displaystyle
\sum_{\P|p,\, f_\P|k} \log \Norm\P & \text{if $n=p^k$}\\
\quad\ 0                           & \text{otherwise},
\end{cases}
\]
where $f_\P$ is the residual degree of $\P$. The definition of $\tilde{\Lambda}_\K$ shows that
$\tilde{\Lambda}_\K(n)\leq n_\K \Lambda(n)$ for every integer $n$.\\
Let
\begin{equation}\label{eq:E6}
\Gamma_\K(s) := \Big[\pi^{-\frac{s+1}{2}}\Gamma\Big(\frac{s+1}{2}\Big)\Big]^{r_2}
                \Big[\pi^{-\frac{s}{2}}  \Gamma\Big(\frac{s}{2}\Big)  \Big]^{r_1+r_2}
\end{equation}
and
\begin{equation}\label{eq:E7}
\xi_\K(s) := s(s-1) \disc_\K^{s/2}\Gamma_\K(s)\zeta_\K(s),
\end{equation}
then the functional equation for $\zeta_\K$ reads
\begin{equation}\label{eq:E8}
\xi_\K(1-s) = \xi_\K(s).
\end{equation}
Moreover, since $\xi_\K(s)$ is an entire function of order $1$ and does not vanish at $s = 0$, we have
\begin{equation}\label{eq:E9}
\xi_\K(s) = e^{A_\K+B_\K s} \prod_\rho \Big(1 - \frac{s}{\rho}\Big) e^{s/\rho}
\end{equation}
for some constants $A_\K$ and $B_\K$, where $\rho$ runs through all the zeros of $\xi_\K(s)$, which are
precisely those zeros $\rho = \beta + i\gamma$ of $\zeta_\K(s)$ for which $0 < \beta < 1$ and are the
so-called ``nontrivial zeros'' of $\zeta_\K(s)$. From now on $\rho$ will denote a nontrivial zero of
$\zeta_\K(s)$. We recall that the zeros are symmetric with respect to the real axis, as a consequence of
the fact that $\zeta_\K(s)$ is real for $s\in\R$.\\
Differentiating~\eqref{eq:E7} and~\eqref{eq:E9} logarithmically we obtain the identity
\begin{equation}\label{eq:E10}
\frac{\zeta'_\K}{\zeta_\K}(s)
  = B_\K
   + \sum_\rho \Big(\frac{1}{s-\rho}+\frac{1}{\rho}\Big)
   - \frac{1}{2}\log \disc_\K
   - \Big[\frac{1}{s}+\frac{1}{s-1}\Big]
   - \frac{\Gamma'_\K}{\Gamma_\K}(s),
\end{equation}
valid identically in the complex variable $s$.\\
Stark~\cite[Lemma~1]{Stark1} proved that the functional equation~\eqref{eq:E8} implies that $B_\K =
-\sum_\rho \Re(\rho^{-1})$ (see also~\cite{Odlyzko5} and~\cite[Ch.~XVII, Th.~3.2]{Lang1}), and that once
this information is available one can use~\eqref{eq:E10} and the definition of the gamma factor
in~\eqref{eq:E6} to prove that the function $f_\K(s) := \sum_\rho\Re\big(\frac{2}{s-\rho}\big)$ can be
exactly computed via the alternative representation
\begin{equation}\label{eq:E12}
f_\K(s)
= 2\Re\frac{\zeta'_\K}{\zeta_\K}(s)
    +\log\frac{\disc_\K}{\pi^{n_\K}}
    +\Re\Big(\frac{2}{s}
             +\frac{2}{s-1}
        \Big)
    +(r_1+r_2)\Re\frac{\Gamma'}{\Gamma}\Big(\frac{s}{2}\Big)
    +r_2\Re\frac{\Gamma'}{\Gamma}\Big(\frac{s+1}{2}\Big).
\end{equation}

Using~\eqref{eq:E7}, \eqref{eq:E8} and~\eqref{eq:E10} one sees that
\begin{equation}\label{eq:E13}
\begin{array}{ll}
\displaystyle\frac{\zeta'_\K}{\zeta_\K}(s) = \frac{r_1+r_2-1}{s} + r_\K + O(s) & \text{as $s\to 0 $}\\[.4cm]
\displaystyle\frac{\zeta'_\K}{\zeta_\K}(s) = \frac{r_2}{s+1} + r'_\K + O(s+1)  & \text{as $s\to -1$},
\end{array}
\end{equation}
where
\[
\begin{array}{ll}
\displaystyle
r_\K  = B_\K
        +1
        -\frac{1}{2} \log\frac{\disc_\K}{\pi^{n_\K}}
        -\frac{r_1+r_2}{2}\frac{\Gamma'}\Gamma(1)
        -\frac{r_2}{2}    \frac{\Gamma'}\Gamma\Big(\frac{1}{2}\Big),\\[.4cm]
\displaystyle
r'_\K  = -\frac{\zeta'_\K}{\zeta_\K}(2)
         -\log\frac{\disc_\K}{\pi^{n_\K}}
        -\frac{n_\K}{2}\frac{\Gamma'}\Gamma\Big(\frac{3}{2}\Big)
        -\frac{n_\K}{2}\frac{\Gamma'}\Gamma(1).
\end{array}
\]
In order to prove our result we need the following explicit bound for $r_\K$
\begin{equation}\label{eq:E14}
|r_\K|\leq  1.02\log \disc_\K - 2.10 n_\K + 8.35
\end{equation}
which is Lemma~3.2 in~\cite{GrenieMolteni2}.\\
At last, we need two elementary lemmas. The first one is an optimized version of a lemma due to
Littlewood~\cite{Littlewood}.
\begin{lemma}\label{lem:E1}
If $x\geq -1$ and $1\leq \Re(\nu)\leq 2$, then
\[
|(1+x)^{\nu}-1-\nu x|
\leq
\Big(\frac{1}{2}
     +\Big(\frac{1}{\Re(\nu)}-\frac{1}{2}\Big)\max(0,-x)
\Big)|\nu(\nu-1)x^2|.
\]
\end{lemma}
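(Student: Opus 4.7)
The natural starting point is Taylor's formula with integral remainder applied to the smooth function $f(t) := (1+t)^\nu$ on $t > -1$, whose second derivative is $f''(t) = \nu(\nu-1)(1+t)^{\nu-2}$. This gives
\[
(1+x)^\nu - 1 - \nu x = \nu(\nu-1)\int_0^x (x-t)(1+t)^{\nu-2} \d t,
\]
and the substitution $t = xs$ (valid for either sign of $x$) converts this to
\[
(1+x)^\nu - 1 - \nu x = \nu(\nu-1)\,x^2 \int_0^1 (1-s)(1+xs)^{\nu-2} \d s.
\]
Setting $a := \Re(\nu)$ and observing that $1+xs \ge 0$ for all $x \ge -1$ and $s \in [0,1]$, we have $|(1+xs)^{\nu-2}| = (1+xs)^{a-2}$, so the problem reduces to proving
\[
I(x) := \int_0^1 (1-s)(1+xs)^{a-2}\d s \le \tfrac{1}{2} + \Bigl(\tfrac{1}{a} - \tfrac{1}{2}\Bigr)\max(0,-x).
\]

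For $x \ge 0$ the bound is trivial: $1+xs \ge 1$ and $a-2 \le 0$ imply $(1+xs)^{a-2} \le 1$, so $I(x) \le \int_0^1 (1-s)\d s = 1/2$, matching the claim since $\max(0,-x)=0$.

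For $-1 \le x \le 0$, set $y := -x \in [0,1]$ and $\Phi(y) := I(-y) = \int_0^1 (1-s)(1-ys)^{a-2}\d s$. A direct evaluation gives $\Phi(0) = 1/2$ and $\Phi(1) = \int_0^1 (1-s)^{a-1}\d s = 1/a$, so the target inequality $\Phi(y) \le \tfrac{1}{2} + (\tfrac{1}{a}-\tfrac{1}{2})y$ is in fact an \emph{equality} at both endpoints, and it suffices to show that $\Phi$ is convex on $[0,1]$. Pointwise, the map $y \mapsto (1-ys)^{a-2}$ has second derivative $(a-2)(a-3)\,s^2\,(1-ys)^{a-4}$, which is nonnegative because both factors $(a-2)$ and $(a-3)$ are nonpositive when $1 \le a \le 2$; multiplying by $(1-s) \ge 0$ and integrating in $s$ preserves the sign, giving $\Phi'' \ge 0$. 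A convex function on $[0,1]$ lies below the chord connecting its endpoint values, which is exactly the desired inequality.

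The main conceptual point is recognizing that the coefficients $\tfrac{1}{2}$ and $\tfrac{1}{a}-\tfrac{1}{2}$ in the statement are precisely the values of $\Phi$ at $y=0$ and $y=1$, so the bound is optimal at both endpoints and the only real work is the convex-interpolation step; the remaining ingredients (Taylor with integral remainder, the change of variable $t=xs$, and the crude estimate for $x \ge 0$) are routine.
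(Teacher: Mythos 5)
Your proof is correct and follows essentially the same route as the paper: Taylor's formula with integral remainder, the trivial bound $(1+xs)^{\Re(\nu)-2}\le 1$ for $x\ge 0$, and, for $x\in[-1,0]$, convexity of the normalized remainder combined with the chord through its endpoint values $\tfrac12$ and $\tfrac1{\Re(\nu)}$, which is exactly the paper's secant-line argument. The only difference is cosmetic: you verify convexity by differentiating under the integral sign in $\Phi(y)=\int_0^1(1-s)(1-ys)^{\Re(\nu)-2}\d s$, whereas the paper expands $((1+x)^{\Re(\nu)}-1-\Re(\nu)x)/x^2$ as a binomial series and checks that the terms of its second derivative are positive on $(-1,0)$.
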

\begin{proof}
The statement is obvious for $\nu=1$, $\nu=2$, $x=-1$ and $x=0$, we thus suppose we are in another case.
From the equality $f(x)-f(0)-f'(0)x = \int_{0}^x\int_{0}^u f''(v)\d v \d u$ one gets
\begin{equation}\label{eq:tardo}
\frac{(1+x)^{\nu}-1-\nu x}{\nu(\nu-1)} = \int_{0}^x\int_{0}^u (1+v)^{\nu-2}\d v \d u.
\end{equation}
Let $x\geq 0$, then $|1+v|^{\Re(\nu)-2}\leq 1$ thus
\[
\frac{|(1+x)^{\nu}-1-\nu x|}{|\nu(\nu-1)|} \leq \int_{0}^x\int_{0}^u \d v \d u
= \frac{x^2}{2}.
\]
Let $x\in (-1,0)$. Then
\begin{equation}\label{eq:E15}
\frac{\Re(\nu)(\Re(\nu)-1)}{x^2}
    \Big|\int_{0}^x\Big|\int_{0}^u (1+v)^{\Re(\nu)-2}\d v\Big| \d u\Big|
= \frac{(1+x)^{\Re(\nu)}-1-\Re(\nu)x}{x^2}.
\end{equation}
The right-hand side may be written as $\sum_{k=0}^{\infty}\!\binom{\Re(\nu)}{k+2}x^k$  and its second
derivative as $\sum_{k=0}^{\infty}\!\binom{\Re(\nu)}{k+4}$ $(k+2)(k+1)x^k$. When $x\in(-1,0)$ each term of
the series is positive; this proves that the right-hand side in~\eqref{eq:E15} is convex in $(-1,0)$ so
that its graph is below the line connecting its points with $x=-1$ and $x=0$. Said line has equation
$y=(\frac{1}{2}+(\frac{1}{2}-\frac{1}{\Re(\nu)})x)\Re(\nu)(\Re(\nu)-1)$,
thus~\eqref{eq:E15} gives
\[
\Big|\int_{0}^x\Big|\int_{0}^u (1+v)^{\Re(\nu)-2}\d v\Big| \d u\Big|
\leq \Big(\frac{1}{2}+\Big(\frac{1}{2}-\frac{1}{\Re(\nu)}\Big)x\Big)x^2
\]
for $\Re(\nu)>1$ immediately and for $\Re(\nu)\geq 1$ by continuity. We get the claim comparing it
with~\eqref{eq:tardo}.
\end{proof}

\begin{lemma}\label{lem:E2}
Let
\[
f_1(x) := \sum_{r=1}^{\infty}\frac{x^{1-2r}}{2r(2r-1)},
\qquad
f_2(x) := \sum_{r=2}^{\infty}\frac{x^{2-2r}}{(2r-1)(2r-2)},
\]
\[
R_{r_1,r_2}(x) := - (r_1+r_2-1)(x\log x-x)
                  + r_2(\log x +1)
                  - (r_1+r_2)f_1(x)
                  - r_2      f_2(x).
\]
Let $x\geq 3$, then
\[
-(r_1+r_2-1)\log x \leq R_{r_1,r_2}'(x) \leq 1.22\frac{\delta_{n_\K\leq 2}}{x}.
\]
\end{lemma}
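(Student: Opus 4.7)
The plan is to differentiate $R_{r_1,r_2}$ term-by-term, obtain a closed form for $R_{r_1,r_2}'(x)$, and then verify each of the two claimed bounds by elementary estimates. Differentiating the power series gives $f_1'(x) = -\sum_{r\geq 1} x^{-2r}/(2r) = \frac{1}{2}\log(1-x^{-2})$ and $f_2'(x) = -\sum_{r\geq 2} x^{1-2r}/(2r-1) = -\frac{1}{2}\log\frac{x+1}{x-1} + \frac{1}{x}$, where the series are summed via the classical expansions for $\log(1-y^2)$ and $\operatorname{artanh}(y)$ at $y = 1/x$, valid for $x \geq 3 > 1$. Plugging into $R_{r_1,r_2}'$ and collecting terms, the $r_2/x$ contributions cancel and one obtains the compact identity
\[
R_{r_1,r_2}'(x) = \log x - \frac{n_\K}{2}\log(x-1) - \frac{r_1}{2}\log(x+1),
\]
where $n_\K = r_1+2r_2$.

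For the lower bound, I would apply weighted AM--GM to the two factors $(x-1)$ and $(x+1)$ with weights $n_\K/2$ and $r_1/2$, whose sum is $r_1+r_2$. This yields
\[
(x-1)^{n_\K/2}(x+1)^{r_1/2}
\;\leq\; \Bigl(x - \tfrac{r_2}{r_1+r_2}\Bigr)^{r_1+r_2}
\;\leq\; x^{r_1+r_2},
\]
so taking logarithms and inserting into the closed form for $R_{r_1,r_2}'$ immediately gives $R_{r_1,r_2}'(x) \geq -(r_1+r_2-1)\log x$, which is exactly the claimed lower bound.

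For the upper bound I would split on $n_\K$. When $n_\K \geq 3$ one has $r_1+r_2 \geq 2$, so the first AM--GM inequality above combined with $x \leq (x-1)^2$ for $x\geq 3$ (equivalent to $x^2-3x+1 \geq 0$) shows $(x - r_2/(r_1+r_2))^{r_1+r_2} \geq x$, hence $R_{r_1,r_2}'(x) \leq 0 \leq 1.22\,\delta_{n_\K\leq 2}/x$. When $n_\K \leq 2$ there are three possibilities: $(r_1,r_2)=(1,0)$ gives $R'(x)=\tfrac12\log\tfrac{x^2}{x^2-1}$, bounded by $\tfrac1{2(x^2-1)}$; $(r_1,r_2)=(2,0)$ gives $R'(x)=\log\tfrac{x}{x^2-1}\leq 0$; the borderline case $(r_1,r_2)=(0,1)$ gives $R'(x)=\log\tfrac{x}{x-1}$, and here I would show that $g(x):=x\log\tfrac{x}{x-1}$ is decreasing on $[3,\infty)$ (its derivative $\log\tfrac{x}{x-1}-\tfrac1{x-1}$ is non-positive by $\log(1+u)\leq u$), so $g(x)\leq g(3)=3\log(3/2) = 1.2164\ldots \leq 1.22$.

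The only delicate step is the closed-form evaluation of $R_{r_1,r_2}'$, where one must track signs and cancellations carefully; once that identity is in hand, both bounds reduce to routine one-variable calculus. The value $1.22$ is evidently chosen precisely to accommodate $3\log(3/2)$, which pinpoints $x=3$, $(r_1,r_2)=(0,1)$ as the extremal configuration and explains the role of the indicator $\delta_{n_\K\leq 2}$.
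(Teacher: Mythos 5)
Your closed form $R_{r_1,r_2}'(x) = \log x - \frac{n_\K}{2}\log(x-1) - \frac{r_1}{2}\log(x+1)$ (with $n_\K=r_1+2r_2$) is correct and agrees with the paper's expression $R_{r_1,r_2}'(x)=\log x - \frac{r_1}{2}\log(x^2-1) - r_2\log(x-1)$, and your handling of the lower bound and of the cases $(r_1,r_2)\in\{(1,0),(2,0),(0,1)\}$ --- in particular identifying $3\log(3/2)=1.216\ldots$ at $x=3$ as the source of the constant $1.22$ and of the indicator $\delta_{n_\K\leq 2}$ --- matches the paper's proof. (For the lower bound the weighted AM--GM is heavier than needed: $\frac{r_1}{2}\log\frac{x^2-1}{x^2}+r_2\log\frac{x-1}{x}\le 0$ is immediate; but your argument is valid.)

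There is, however, a genuine logical flaw in your case $n_\K\ge 3$ (equivalently $r_1+r_2\ge 2$). To get $R_{r_1,r_2}'(x)\le 0$ you need the \emph{lower} bound $(x-1)^{n_\K/2}(x+1)^{r_1/2}\ge x$, whereas AM--GM bounds this product from \emph{above} by $\bigl(x-\frac{r_2}{r_1+r_2}\bigr)^{r_1+r_2}$; hence showing $\bigl(x-\frac{r_2}{r_1+r_2}\bigr)^{r_1+r_2}\ge x$ tells you nothing about the product, since both inequalities point the same way and cannot be chained. The conclusion is true and the repair is one line, which is exactly the paper's argument: $(x-1)^{n_\K/2}(x+1)^{r_1/2}\ge (x-1)^{(n_\K+r_1)/2}=(x-1)^{r_1+r_2}\ge (x-1)^2\ge x$ for $x\ge\frac{3+\sqrt{5}}{2}$, i.e. $R_{r_1,r_2}'(x)\le \log x-2\log(x-1)\le 0$. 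With that substitution your proof is complete and essentially identical to the paper's.
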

\begin{proof}
We have
\[
f_1(x) = \frac{1}{2}\Big[x\log(1-x^{-2}) + \log\Big(\frac{1+x^{-1}}{1-x^{-1}}\Big)\Big],
\qquad
f_2(x) = 1-\frac{1}{2}\Big[\log(1-x^{-2}) + x\log\Big(\frac{1+x^{-1}}{1-x^{-1}}\Big)\Big],
\]
%
%
and
\begin{align*}
R_{r_1,r_2}'(x)&= -(r_1+r_2-1)\log x
                  - \frac{1}{2}(r_1+r_2)\log(1-x^{-2})
                  - \frac{r_2}{2} \log\Big(\frac{1-x^{-1}}{1+x^{-1}}\Big)\\
               &= -(r_1+r_2-1)\log x
                  - \frac{r_1}{2}\log(1-x^{-2})
                  -       r_2    \log(1-x^{-1});
\end{align*}
this equality already proves the lower bound. The upper bound immediately follows for the cases where
$r_1+r_2=1$. Suppose $r_1+r_2\geq 2$, writing $R_{r_1,r_2}'(x)$ as
\[
R_{r_1,r_2}'(x) = \log x
                  - \frac{r_1}{2}\log(x^2-1)
                  -       r_2    \log(x-1).
\]
Then for $x> 1$ one gets
\[
R_{r_1,r_2}'(x) \leq \log x - (r_1+r_2)\log(x-1)
                \leq \log x - 2\log(x-1)
                \leq 0,
\]
where the last inequality is true for $x\geq \tfrac{3+\sqrt{5}}{2}= 2.61\ldots$
\end{proof}

\section{Upper bounds}\label{sec:A3}
For the proof of the theorem we need bounds for three sums on nontrivial zeros, namely for
\[
\sum_{|\gamma|\leq T} 1,
\qquad
\sum_{|\gamma|\geq T} \frac{1}{|\rho|^2}
\quad
\text{and}
\quad
\sum_{|\gamma|\leq T} \frac{1}{|\rho|}.
\]
The first sum is simply the number $N_\K(T)$ of nontrivial zeros in the rectangle $0<\Re(s)< 1$,
$|\Im(s)|\leq T$. It has been explicitly estimated by Trudgian~\cite{TrudgianIII} in a work improving
Kadiri--Ng's paper~\cite{KadiriNg}. We estimate the second sum by partial summation using this result.
For the last one a simple partial summation is not possible since both Kadiri--Ng's and Trudgian's
results are proved only for $T\geq 1$ and improve when the range is further restricted to $T\geq T_0$
with a $T_0\geq 1$. As a consequence we bound the part of the third sum coming from the zeros far enough
of the real axis by partial summation, and the remaining with a different technique. In fact,
in~\cite{GrenieMolteni2} we have shown a new method to bound converging sums on zeros under GRH. The
method works very well but depends on several parameters whose values are fixed via a trial and error
approach. Thus, in order to apply it we need to fix a value for $T_0$, and the final result will only be
valid in the range $T\geq T_0$. After several tests the choice $T_0=5$ seemed to represent a good
compromise between the need of having a large $T_0$ (to take advantage of the better estimate in
Trudgian's result) and a small $T_0$ (to make the final theorem valid in a larger range). Our result is
as follows.
\begin{lemma}\label{lem:E3}
(GRH) One has
\[
\sum_{|\gamma|\leq 5} \frac{1}{|\rho|} \leq 1.02\log\disc_\K - 1.63 n_\K + 7.04.
\]
\end{lemma}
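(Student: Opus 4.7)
\medskip
\noindent\textbf{Proof plan for Lemma~\ref{lem:E3}.}
Under GRH every nontrivial zero has the form $\rho=\tfrac12+i\gamma$, so $1/|\rho|=1/\sqrt{1/4+\gamma^2}$. The idea is to bound $\sum_{|\gamma|\le 5}1/|\rho|$ by the sum of a pointwise non-negative majorant whose total mass against the zeros can be evaluated by the explicit formula \eqref{eq:E12}. Concretely I would look for real numbers $1/2<\sigma_1<\dots<\sigma_m$ and non-negative weights $c_1,\dots,c_m$ such that the kernel
\[
K(\gamma)\;:=\;\sum_{j=1}^m c_j\,\frac{2(\sigma_j-\tfrac12)}{(\sigma_j-\tfrac12)^2+\gamma^2}
\]
satisfies $K(\gamma)\ge 1/\sqrt{1/4+\gamma^2}$ for $|\gamma|\le 5$ (and $K\ge 0$ automatically everywhere). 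Note that by the reflection symmetry of the zeros about the real axis, $K(\gamma)=\sum_j c_j\Re\!\bigl(2/(\sigma_j-\rho)\bigr)$, so
\[
\sum_{|\gamma|\le 5}\frac{1}{|\rho|}\;\le\;\sum_{\rho}K(\gamma)\;=\;\sum_{j=1}^{m}c_j\,f_\K(\sigma_j).
\]

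Next I would invoke \eqref{eq:E12} at each $\sigma_j$ and estimate the resulting pieces. For $\sigma_j>1$ the bound $\tilde\Lambda_\K(n)\le n_\K\Lambda(n)$ gives $|\Re\zeta_\K'/\zeta_\K(\sigma_j)|\le n_\K|\zeta'/\zeta(\sigma_j)|$, which is a harmless constant times $n_\K$; the $\log(\disc_\K/\pi^{n_\K})$ term contributes exactly $\log\disc_\K-n_\K\log\pi$; the elementary rational pieces $2/\sigma_j+2/(\sigma_j-1)$ are explicit constants; and the digamma factors $(r_1+r_2)\Re\Gamma'/\Gamma(\sigma_j/2)+r_2\Re\Gamma'/\Gamma((\sigma_j+1)/2)$ are bounded by $n_\K$ times an explicit constant. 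Summing against $c_j$ produces an inequality of the form $(\sum_j c_j)\log\disc_\K+(\text{const})\,n_\K+\text{const}$, and the coefficient $1.02$ of $\log\disc_\K$ in the statement coincides with $\sum_j c_j$ for the optimal choice.

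The main obstacle is the design and optimization of the kernel $K$. A single point $\sigma$ is not enough: a direct computation shows that, maximizing $c=\inf_{|\gamma|\le 5}K_\sigma(\gamma)\sqrt{1/4+\gamma^2}$ over $\sigma$, one is stuck at $1/c\approx 1.66$, well above the target $1.02$. To push the coefficient down to $1.02$ one must use a combination of several $\sigma_j$, tuned so that the envelope $K$ hugs $1/\sqrt{1/4+\gamma^2}$ tightly on $[-5,5]$ while keeping $\sum c_j$ small. This is the finite-dimensional linear/convex optimization indicated by the authors' reference to their previous paper \cite{GrenieMolteni2}; numerically one checks that the optimum is attained at a small number of $\sigma_j\in(1/2,2]$, and the resulting additive constants $-1.63\,n_\K+7.04$ are obtained by substituting explicit numerical bounds for the digamma values and for $\zeta'/\zeta(\sigma_j)$.

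The reason to stop at the height $T_0=5$ is made clear in the introductory discussion of Section~3: Trudgian's counting estimate becomes effective only for heights above this threshold, so for the small-$\gamma$ part the above kernel method is the natural replacement for a brute-force partial summation. With the optimum parameters fixed, the three quantities $\sum_j c_j$, the $n_\K$-coefficient, and the absolute constant can simply be read off, yielding the stated bound $1.02\log\disc_\K-1.63\,n_\K+7.04$.
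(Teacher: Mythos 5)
Your overall strategy is indeed the paper's: majorize the truncated sum $\sum_{|\gamma|\le 5}|\rho|^{-1}$ pointwise by a finite combination $\sum_j a_j f(s_j,\gamma)$ of the kernels $f(s,\gamma)=\frac{2(s-1/2)}{(s-1/2)^2+\gamma^2}$ and evaluate $\sum_j a_j f_\K(s_j)$ via~\eqref{eq:E12}. But your restriction to \emph{non-negative} weights $c_j$ is a genuine gap, not an implementation detail: with $c_j\ge 0$ the coefficient of $\log\disc_\K$, which is exactly $\sum_j c_j$, cannot be pushed down to $1.02$. Indeed, test the constraint against the measure $\mu:=\frac{1}{2\pi}\d\gamma$ on $[-5,5]$ plus $\frac12(\delta_{5}+\delta_{-5})$: writing $\lambda=\sigma-\frac12$, one has
\[
\int P_\lambda\,\d\mu=\frac{2}{\pi}\atan\Big(\frac{5}{\lambda}\Big)+\frac{2\lambda}{\lambda^2+25}\leq 1
\qquad\forall\lambda>0,
\]
since this function is strictly decreasing in $\lambda$ with limit $1$ as $\lambda\to 0^+$. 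Hence any non-negative combination $K=\sum_j c_jP_{\lambda_j}$ with $K(\gamma)\ge (1/4+\gamma^2)^{-1/2}$ on $[-5,5]$ satisfies
\[
\sum_j c_j\;\ge\;\int_{[-5,5]}\frac{\d\mu(\gamma)}{\sqrt{1/4+\gamma^2}}
=\frac{\arcsinh 10}{\pi}+\frac{2}{\sqrt{101}}>1.15,
\]
so $1.02$ is out of reach; and your further guess that the optimal $\sigma_j$ lie in $(1/2,2]$ is worse still, since the single constraint at $\gamma=5$ then forces $\sum_j c_j\ge\frac{(1.5)^2+25}{3\sqrt{25.25}}\approx 1.8$. (Moreover, for $\sigma_j\le 1$ you could not bound $\Re\frac{\zeta'_\K}{\zeta_\K}(\sigma_j)$ by the Dirichlet-series comparison at all.)

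What makes the lemma work in the paper is precisely what your plan omits: the coefficients are \emph{signed} (the paper takes $47$ points $s_j=1+j/2$, $j=1,\dots,47$, with alternating-sign coefficients of enormous size, determined by interpolation and tangency conditions at chosen ordinates, and $\sum_j a_j=1.011\ldots$). Signed coefficients let the majorant hug $g$ on $[-5,5]$ and drop to essentially $0$ outside, which is how the discriminant coefficient gets below the non-negative barrier above — but they create two obligations your sketch never meets. First, $F\ge g$ on all of $\R$ is no longer automatic and must be certified (the paper uses Sturm's algorithm). Second, the term $\sum_j a_j\frac{\zeta'_\K}{\zeta_\K}(s_j)$ can no longer be discarded or bounded termwise as you do; the paper must study the sign of $S(n)=\sum_j a_j n^{-s_j}$ (positive for $n\le 60975$, negative beyond, with a pairing argument for $n\ge 128000$) and perform a finite computation over prime powers to obtain the bound $\le 0.12\,n_\K$, which feeds into the constant $-1.63\,n_\K$. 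Without these elements the stated constants cannot be reached by your scheme.
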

\begin{proof}
We apply the same technique we have already used for Lemma~4.1 in~\cite{GrenieMolteni2}. Thus, let
$f(s,\gamma):=4(2s-1)/((2s-1)^2+4\gamma^2)$, so that $f_\K(s) = \sum_{\gamma} f(s,\gamma)$, and let
$g(\gamma):= 2(1+4\gamma^2)^{-1/2}\Chi_{[-5,5]}(\gamma)$, so that $\sum_{|\gamma|\leq 5}|\rho|^{-1} =
\sum_{\gamma} g(\gamma)$. We look for a finite linear combination of $f(s,\gamma)$ at suitable points
$s_j$ such that
\begin{equation}\label{eq:E16}
g(\gamma) \leq F(\gamma) := \sum_j a_j f(s_j,\gamma)
\qquad \forall\gamma\in\R,
\end{equation}
so that
\begin{equation}\label{eq:E17}
\sum_{|\gamma|\leq 5}\frac{1}{|\rho|} \leq \sum_j a_j f_\K(s_j);
\end{equation}
once~\eqref{eq:E17} is proved, we recover a bound for the sum on zeros recalling the
identity~\eqref{eq:E12}. According to this approach the final coefficient of $\log\disc_\K$ will be the
sum of all $a_j$, thus we are interested into linear combinations for which this sum is as small as
possible. We set $s_j = 1+ j/2$ with $j=1,\ldots,2q+3$ for a suitable integer $q$. Let $\Upsilon \subset
(0,\infty)$ be a set with $q$ numbers. We require:
\begin{enumerate}
\item $F(\gamma)=g(\gamma)$ for all $\gamma\in\Upsilon\cup\{0,5\}$,
\item $F'(\gamma)=g'(\gamma)$ for all $\gamma\in\Upsilon$,
\item $\lim_{\gamma\to\infty}\gamma^2 F(\gamma)=\lim_{\gamma\to\infty}\gamma^2 g(\gamma)=0$.
\end{enumerate}
This produces a set of $2q+3$ linear equations for the $2q+3$ constants $a_j$, and we hope that these
satisfy~\eqref{eq:E16} for every $\gamma$. We choose $q:=22$ and $\Upsilon:=\{0.6, 1, 1.9, 2.9, 3.9, 10, 13,
14, 15, 16, 17,$ $18, 19, 20, 30, 40, 50, 100, 10^3, 10^4, 10^5, 10^6\}$. Finally, with an abuse of
notation we take for $a_j$ the solution of the system, rounded above to $10^{-7}$: this produces the
numbers in Table~\ref{tab:E4}. Then, using Sturm's algorithm, we prove that the values found actually
give an upper bound for $g$, so that~\eqref{eq:E17} holds with such $a_j$'s. These constants verify
\begin{equation}\label{eq:E18}
\begin{aligned}
&\sum_j a_j                                                =     1.011\ldots,\\
&\sum_j a_j\frac{\Gamma'}{\Gamma}\Big(\frac{s_j}{2}\Big)   \leq -1.13,
\end{aligned}
\qquad\qquad
\begin{aligned}
&\sum_j a_j\Big(\frac{2}{s_j}+\frac{2}{s_j-1}\Big)         \leq  7.04,\\
&\sum_j a_j\frac{\Gamma'}{\Gamma}\Big(\frac{s_j+1}{2}\Big) \leq -0.31.
\end{aligned}
\end{equation}
We write $\sum_j a_j\frac{\zeta'_\K}{\zeta_\K}(s_j)$ as
\[
-\sum_n\tilde\Lambda_\K(n)S(n)\quad\text{with}\quad S(n):=\sum_j \frac{a_j}{n^{s_j}}.
\]
We check numerically that $S(n)>0$ for $n\leq 60975$ and that it is negative for $60975<n\leq 128000$.
Then, since the sign of $a_j$ alternates, we can easily prove that each pair $\frac{a_1}{n^{s_1}} +
\frac{a_2}{n^{s_2}}$, \ldots, $\frac{a_{2q+1}}{n^{s_{2q+1}}} + \frac{a_{2q+2}}{n^{s_{2q+2}}}$ and the
last term $\frac{a_{2q+3}}{n^{s_{2q+3}}}$ are negative for every $n\geq 128000$. Thus
\begin{align}
\sum_j a_j\frac{\zeta'_\K}{\zeta_\K}(s_j)
&= -\sum_n\tilde\Lambda_\K(n)S(n)
 \leq -n_\K\sum_{n> 60975}\Lambda(n)S(n)                        \label{eq:E19}\\
&=    -n_\K\Big[\sum_{n=1}^\infty\Lambda(n)S(n)
                -\sum_{n\leq 60975}\Lambda(n)S(n)
           \Big]                                                \notag\\
&=     n_\K\Big[\sum_{j}a_j\frac{\zeta'}{\zeta}(s_j)
                +\sum_{n\leq 60975}\Lambda(n)S(n)
           \Big]
 \leq 0.12 n_\K.                                                \notag
\end{align}
The result now follows from~\eqref{eq:E12}, and~(\ref{eq:E17}--\ref{eq:E19}).
\end{proof}

Now we can bound the sums.

\subsection*{First sum}
Trudgian~\cite{TrudgianIII} has proved that
\begin{equation}\label{eq:E20}
\Big|N_\K(T)-\frac{T}{\pi}\log\Big(\Big(\frac{T}{2\pi e}\Big)^{n_\K}\disc_\K\Big)\Big|
\leq
\frac{1}{\pi}(c_1(\eta)W_\K(T) + c_2(\eta)n_\K + c_3(\eta))
\qquad \forall T\geq T_0\geq 1,
\end{equation}
where $W_\K(T) := \log\disc_\K+n_\K\log(T/2\pi)$, $c_1(\eta)=\pi D_1$, $c_2(\eta)=\pi(D_2+D_1\log 2\pi)$
and $c_3(\eta)=\pi D_3$ and the $D_j$ are Trudgian's constants which depend on $T_0$,
$\eta\in(0,\tfrac{1}{2}]$ and on two other parameters $p$ and $r$. We thus have
\[
N_\K(T)
\leq \frac{T}{\pi}\Big(1+\frac{c_1(\eta)}{T}\Big)W_\K(T)  \\
      - \frac{T}{\pi}\Big(1 - \frac{c_2(\eta)}{T}\Big)n_\K
      + \frac{c_3(\eta)}{\pi}
\qquad
\forall T\geq T_0.
\]
We fix $\eta=\frac{1}{2}$, $p=-\eta=-\frac{1}{2}$ (this choice differs from the one
in~\cite{TrudgianIII}) and $r=\frac{1+\eta-p}{1/2+\eta}$ (as in~\cite{TrudgianIII}), so that actually
$r=2$; recall that $T_0=5$. Following Trudgian's argument we find $D_1 = 0.459\ldots$, $D_2= 1.996\ldots$,
$D_3= 2.754\ldots$, hence
\begin{equation}\label{eq:E21}
N_\K(T)
\leq \frac{T}{\pi}\Big(1+\frac{1.45}{T}\Big)W_\K(T)
      - \frac{T}{\pi}\Big(1 - \frac{8.93}{T}\Big)n_\K
      + \frac{8.66}{\pi}
\qquad
\forall T\geq 5.
\end{equation}

\subsection*{Second sum}
We proceed by partial summation. Let formula~\eqref{eq:E20} for $N_\K(T)$ be written as $A(T) + R(T)$,
respectively the asymptotic and the remainder term. Then
\begin{align*}
\sum_{|\gamma|\geq T}\frac{1}{|\rho|^2}
&= \sum_{|\gamma|\geq T}\frac{1}{1/4+\gamma^2}
\leq \int_T^{+\infty}\frac{\d A(\gamma)}{1/4+\gamma^2}
   + \frac{R(T)}{1/4+T^2}
   + \int_T^{+\infty}\frac{2\gamma R(\gamma)\d \gamma}{(1/4+\gamma^2)^2}    \\
&= \int_T^{+\infty}\frac{\d A(\gamma)}{1/4+\gamma^2}
   + \frac{2R(T)}{1/4+T^2}
   + \int_T^{+\infty}\frac{R'(\gamma)\d \gamma}{1/4+\gamma^2}               \\
&= \int_T^{+\infty}\frac{\d A(\gamma)}{1/4+\gamma^2}
   + \frac{2R(T)}{1/4+T^2}
   + \frac{c_1(\eta)}{\pi}n_\K\int_T^{+\infty}\frac{\gamma^{-1}\d \gamma}{1/4+\gamma^2} \\
&\leq \int_T^{+\infty}\frac{\d A(\gamma)}{1/4+\gamma^2}
   + \frac{2R(T)}{T^2}
   + \frac{c_1(\eta)}{2\pi T^2}n_\K.
\end{align*}
Using
\[
\int_T^{+\infty}\frac{\d \gamma}{1/4+\gamma^2} = 2\atan\Big(\frac{1}{2T}\Big)
\quad
\text{which is}\leq \frac{1}{T},
\text{ and }\geq \frac{1}{T} - \frac{1/12}{T^3}
\]
\[
\int_T^{+\infty}\frac{\log\gamma}{1/4+\gamma^2}\d \gamma
\leq \int_T^{+\infty}\gamma^{-2}\log\gamma\d \gamma
= \frac{\log(eT)}{T},
\]
one has
\[
\int_T^{+\infty}\frac{\d A(\gamma)}{1/4+\gamma^2}
\leq \frac{W_\K(T)}{\pi T}
    + \Big(1 + \frac{\log 2\pi}{12T^2}\Big)\frac{n_\K}{\pi T}.
\]
Thus
{\small
\begin{align*}
\sum_{|\gamma|\geq T}\frac{\pi}{|\rho|^2}
\leq& \frac{W_\K(T)}{T}
    + \Big(1 + \frac{\log 2\pi}{12T^2}\Big)\frac{n_\K}{T}
    +\frac{2}{T^2}\Big[c_1(\eta)\log\disc_\K + \Big(c_1(\eta)\log\Big(\frac{T}{2\pi}\Big) +c_2(\eta)+ \frac{c_1(\eta)}{4}\Big)n_\K + c_3(\eta)\Big]\\
=& \Big(1 + \frac{2c_1(\eta)}{T}\Big)\frac{W_\K(T)}{T}
    + \Big(1 + \frac{\log 2\pi}{12T^2}\Big)\frac{n_\K}{T}
    +\Big(2c_2(\eta)+ \frac{c_1(\eta)}{2}\Big)\frac{n_\K}{T^2}
    +\frac{2c_3(\eta)}{T^2}
\end{align*}
}
hence
\begin{equation}\label{eq:E22}
\sum_{|\gamma|\geq T}\frac{\pi}{|\rho|^2}
\leq \Big(1 + \frac{2.89}{T}\Big)\frac{W_\K(T)}{T}
    +\Big(1 + \frac{18.61}{T}\Big)\frac{n_\K}{T}
    +\frac{17.31}{T^2}
\qquad
\forall T\geq 5.
\end{equation}

\subsection*{Third sum}
We proceed again by partial summation, plus the contribution of Lemma~\ref{lem:E3} to bound the part of
the sum coming from low-lying zeros. We have
\begin{align*}
\sum_{|\gamma|\leq T}\frac{1}{|\rho|}
&= \sum_{|\gamma|\leq T}\frac{1}{(1/4+\gamma^2)^{1/2}}
\leq \sum_{|\gamma|\leq 5} \frac{1}{|\rho|}
   + \sum_{5\leq |\gamma|\leq T}\frac{1}{(1/4+\gamma^2)^{1/2}}                            \\
&\leq \sum_{|\gamma|\leq 5} \frac{1}{|\rho|}
   + \int_5^{T}\frac{\d A(\gamma)}{(1/4+\gamma^2)^{1/2}}
   + \frac{2R(5)}{\sqrt{101}}
   + \frac{R(T)}{(1/4+T^2)^{1/2}}
   + \int_5^{T}\frac{\gamma R(\gamma)\d \gamma}{(1/4+\gamma^2)^{3/2}}                     \\
&= \sum_{|\gamma|\leq 5} \frac{1}{|\rho|}
   + \int_5^{T}\frac{\d A(\gamma)}{(1/4+\gamma^2)^{1/2}}
   + \frac{4R(5)}{\sqrt{101}}
   + \int_5^{T}\frac{R'(\gamma)\d \gamma}{(1/4+\gamma^2)^{1/2}}                           \\
&= \sum_{|\gamma|\leq 5} \frac{1}{|\rho|}
   + \frac{4R(5)}{\sqrt{101}}
   + \int_5^{T}\frac{\d A(\gamma)}{(1/4+\gamma^2)^{1/2}}
   + \frac{c_1(\eta)}{\pi}n_\K\int_5^{T}\frac{\gamma^{-1}\d \gamma}{(1/4+\gamma^2)^{1/2}} \\
&\leq \sum_{|\gamma|\leq 5} \frac{1}{|\rho|}
   + \frac{4R(5)}{\sqrt{101}}
   + 0.2 \frac{c_1(\eta)}{\pi}n_\K
   + \int_5^{T}\frac{\d A(\gamma)}{(1/4+\gamma^2)^{1/2}}.
\end{align*}
Using
\[
\int_5^{T}\frac{\d \gamma}{(1/4+\gamma^2)^{1/2}}
= \log\Big(\frac{2T+\sqrt{4T^2+1}}{10+\sqrt{101}}\Big)
\]
which is $\leq \log T -\log 5$ and $\geq \log T - 1.62$ for $T\geq 5$, and
\[
\int_5^{T}\frac{\log\gamma\d \gamma}{(1/4+\gamma^2)^{1/2}}
\leq \int_5^{T}\frac{\log\gamma}{\gamma} \d \gamma
=    \frac{\log^2 T}{2} - \frac{\log^2 5}{2},
\]
one has
\[
\int_5^{T}\frac{\d A(\gamma)}{(1/4+\gamma^2)^{1/2}}
\leq \Big(\log\Big(\frac{T}{2\pi}\Big) + 0.23\Big)\frac{\log\disc_\K}{\pi}
    + \Big(\log^2\Big(\frac{T}{2\pi}\Big) - 0.01\Big)\frac{n_\K}{2\pi}.
\]
%
Thus recalling Lemma~\ref{lem:E3} we get
\begin{align*}
\sum_{|\gamma|\leq T}\frac{\pi}{|\rho|}
\leq& \Big(\log\Big(\frac{T}{2\pi}\Big) + 0.23\Big)\log\disc_\K
    +\frac{n_\K}{2}\Big(\log^2\Big(\frac{T}{2\pi}\Big) - 0.01\Big)
    + \sum_{|\gamma|\leq 5} \frac{\pi}{|\rho|}
    + \frac{4\pi R(5)}{\sqrt{101}}
    + 0.2 c_1(\eta)n_\K                                                              \\
\leq& \Big(\log\Big(\frac{T}{2\pi}\Big) + 0.23\Big)\log\disc_\K
    +\frac{n_\K}{2}\Big(\log^2\Big(\frac{T}{2\pi}\Big) - 0.01\Big)
    + \pi(1.02\log\disc_\K - 1.63 n_\K + 7.04)                                       \\
&   + \frac{4}{\sqrt{101}}\Big(c_1(\eta)(\log\disc_\K + n_\K\log\big(\frac{5}{2\pi}\big)) + c_2(\eta)n_\K + c_3(\eta)\Big)
    + 0.2 c_1(\eta)n_\K                                                              \\
=& \Big(\log\Big(\frac{T}{2\pi}\Big) + 0.23 + 1.02\pi + \frac{4}{\sqrt{101}}c_1(\eta)\Big)\log\disc_\K
    + 7.04\pi + \frac{4}{\sqrt{101}}c_3(\eta)                                        \\
&   + \Big(\frac{1}{2}\log^2\Big(\frac{T}{2\pi}\Big)
           - \frac{1}{2} 0.01
           - 1.63\pi
           + \frac{4\log(5/2\pi)}{\sqrt{101}}c_1(\eta)
           + \frac{4}{\sqrt{101}}c_2(\eta)
           + 0.2 c_1(\eta)
      \Big)n_\K.
\end{align*}
Hence
\begin{equation}\label{eq:E23}
\sum_{\substack{\rho\\|\gamma|\leq T}}\frac{\pi}{|\rho|}
\leq \Big(\log\Big(\frac{T}{2\pi}\Big) + 4.01\Big)\log \disc_\K
     + \Big(\frac{1}{2}\log^2\Big(\frac{T}{2\pi}\Big) - 1.41\Big)n_\K
     + 25.57
\qquad
\forall T\geq 5.
\end{equation}

\section{Proofs}\label{sec:A4}
\begin{proof}[Proof of Theorem~\ref{th:E1}]
Let
\[
\psi^{(1)}_\K(x) := \int_{0}^{x}\psi_\K(t)\d t.
\]
As observed by Goldston~\cite{Goldston3}, since $\Lambda_\K(\mathfrak{I})\geq 0$ one has the double
inequality
\begin{equation}\label{eq:E24}
\begin{array}{l}
\displaystyle
\psi_\K(x) \leq \frac{\psi^{(1)}_\K(x+h)-\psi^{(1)}_\K(x)}{h} \qquad \text{if $h>0$},   \\
\displaystyle
\psi_\K(x) \geq \frac{\psi^{(1)}_\K(x+h)-\psi^{(1)}_\K(x)}{h} \qquad \text{if $-x<h<0$}.
\end{array}
\end{equation}
As in~\cite[Ch.~IV Sec.~4, p. 73]{Ingham2} and~\cite[Sec.~5]{LagariasOdlyzko}, considering the
integral representation
\[
\psi^{(1)}_\K(x) = -\frac{1}{2\pi i}\int_{2-i\infty}^{2+i\infty} \frac{\zeta'_\K}{\zeta_\K}(s)\frac{x^{s+1}}{s(s+1)}\,\d s
\]
one gets for every $x>1$ the identity
\[
\psi^{(1)}_\K(x) = \frac{x^2}{2}
              - \sum_{\rho}\frac{x^{\rho+1}}{\rho(\rho+1)}
              - x r_\K
              + r'_\K
              + R_{r_1,r_2}(x)
\]
where $R_{r_1,r_2}(x)$ is defined in Lemma~\ref{lem:E2} and $r_\K$ and $r'_\K$ are defined
in~\eqref{eq:E13}. Thus
\[
\frac{\psi^{(1)}_\K(x+h)-\psi^{(1)}_\K(x)}{h}
= x
  + \frac{h}{2}
  - \sum_{\rho}\frac{(x+h)^{\rho+1}-x^{\rho+1}}{h\rho(\rho+1)}
  - r_\K
  + R_{r_1,r_2}'(\eta)
\]
for a suitable $\eta$ in the interval between $x$ and $x+h$. Hence, for every $x\geq 3$ and $h\neq 0$
such that $x+h>1$, Lemma~\ref{lem:E2} gives
\begin{equation}\label{eq:E25}
-d_\K\log x
\leq
\frac{\psi^{(1)}_\K(x+h)-\psi^{(1)}_\K(x)}{h}
- \Big(
        x+\frac{h}{2}
        - \sum_{\rho}\frac{(x+h)^{\rho+1}-x^{\rho+1}}{h\rho(\rho+1)}
        - r_\K
  \Big)
\leq
1.22\frac{\delta_{n_\K\leq 2}}{x}.
\end{equation}
We will now split the sum on the zeros in two parts: above and below $T$. The technique is the same for
$h>0$ and $h<0$ but the constants are slightly different, we thus proceed separately for the two cases.
\smallskip\\
Suppose first $h>0$. Under GRH we have
\begin{align*}
\Big|\sum_{|\gamma|\geq T}\frac{(x+h)^{\rho+1}-x^{\rho+1}}{h\rho(\rho+1)}\Big|
\leq &
\sum_{|\gamma|\geq T}
 x^{\frac{3}{2}}\frac{\big(1+\frac{h}{x}\big)^{\frac{3}{2}}+1}{h|\rho(\rho+1)|}
\leq
A\frac{x^{\frac{3}{2}}}{h}\sum_{|\gamma|\geq T}\frac{1}{|\rho^2|},
\end{align*}
with $A:=1+\big(1+\frac{h}{x}\big)^{\frac{3}{2}}$. Thus from~\eqref{eq:E22} and for $T\geq 5$, we get
\[
\Big|\sum_{|\gamma|\geq T}\frac{(x+h)^{\rho+1}-x^{\rho+1}}{h\rho(\rho+1)}\Big|\\
\leq
  \frac{Ax^{\frac{3}{2}}}{\pi Th}
  \Big(\Big(1 + \frac{2.89}{T}\Big)W_\K(T)
    + \Big(1 + \frac{18.61}{T}\Big)n_\K
    +\frac{17.31}{T}
  \Big).
\]
We rewrite
\begin{align*}
\sum_{|\gamma|<T}\frac{(x+h)^{\rho+1}-x^{\rho+1}}{h\rho(\rho+1)}
=&
\sum_{|\gamma|<T}\frac{x^\rho}{\rho}+
\sum_{|\gamma|<T}
            \frac{(x+h)^{\rho+1}-x^{\rho+1}-h(\rho+1)x^\rho}{h\rho(\rho+1)}\\
=&
\sum_{|\gamma|<T}\frac{x^\rho}{\rho}+
hx^{-1/2}\sum_{|\gamma|<T}w_\rho x^{i\gamma}
\end{align*}
with
\[
w_\rho := \frac{\big(1+\frac{h}{x}\big)^{\rho+1}-1-(\rho+1)\frac{h}{x}}
               {\rho(\rho+1)\big(\frac{h}{x}\big)^2}.
\]
From Lemma~\ref{lem:E1} we know that $|w_\rho|\leq\frac{1}{2}$ so that from~\eqref{eq:E21} we deduce
\[
\Big|\sum_{|\gamma|<T}w_\rho x^{i\gamma}\Big|\leq
  \frac{1}{2\pi}\Big[(T+1.45)W_\K(T)
                     - (T - 8.93)n_\K
                     + 8.66
                \Big]
\]
for every $T\geq 5$, giving
\begin{multline*}
\Big|\sum_{\rho}\frac{(x+h)^{\rho+1}-x^{\rho+1}}{h\rho(\rho+1)}
     -\sum_{|\gamma|<T}\frac{x^\rho}{\rho}
\Big|
\leq
  \frac{Ax\sqrt{x}}{\pi Th}\Big[\Big(1+\frac{2.89}{T}\Big)W_\K(T)
                                + \Big(1
                                       + \frac{18.61}{T}
                                  \Big)n_\K
                                + \frac{17.31}{T}
                           \Big] \\
  + \frac{Th}{2\pi\sqrt{x}}\Big[\Big(1+\frac{1.45}{T}\Big)W_\K(T)
                               - \Big(1 - \frac{8.93}{T}\Big)n_\K
                               + \frac{8.66}{T}
                          \Big].
\end{multline*}
The comparison of the main terms suggests taking $h=\frac{2x}{T}$; this brings
$A=1+\big(1+\frac{2}{T}\big)^{3/2}\leq 2+\frac{3}{T}+\frac{3}{2T^2}$
and
\begin{multline*}
\frac{\pi}{\sqrt{x}}
\Big|\sum_{\rho}\frac{(x+h)^{\rho+1}-x^{\rho+1}}{h\rho(\rho+1)}
     -\sum_{|\gamma|<T}\frac{x^\rho}{\rho}
\Big|
\leq
\Big(1+\frac{3}{2T} +\frac{3}{4T^2}\Big)
                           \Big[\Big(1+\frac{2.89}{T}\Big)W_\K(T)
                                + \Big(1
                                       + \frac{18.61}{T}
                                  \Big)n_\K
                                + \frac{17.31}{T}
                           \Big]                                    \\
   +                       \Big[\Big(1+\frac{1.45}{T}\Big)W_\K(T)
                                - \Big(1 - \frac{8.93}{T}\Big)n_\K
                                + \frac{8.66}{T}
                           \Big].
\end{multline*}
After some simplifications we thus have for $T\geq 5$
\begin{multline}\label{eq:E26}
\frac{\pi}{\sqrt{x}}
  \Big|\sum_{\rho}\frac{(x+h)^{\rho+1}-x^{\rho+1}}{h\rho(\rho+1)}
       -\sum_{|\gamma|<T}\frac{x^\rho}{\rho}
  \Big|
\leq \Big[2
          + \frac{5.84}{T}
          + \frac{5.52}{T^2}
     \Big]W_\K(T)                                                   \\
   + \Big[\frac{29.04}{T}
          + \frac{31.46}{T^2}
     \Big]n_\K
   + \frac{25.97}{T}
   + \frac{28.57}{T^2}.
\end{multline}



\noindent For $h<0$ the computation is similar with only a few differences. We now have $A\leq 2$ and
$|w_\rho|\leq \frac{1}{2}+\frac{|h|}{6x}$ from Lemma~\ref{lem:E1}. Thus
\begin{multline*}
\Big|\sum_{\rho}\frac{(x+h)^{\rho+1}-x^{\rho+1}}{h\rho(\rho+1)}
     -\sum_{|\gamma|<T}\frac{x^\rho}{\rho}\Big|
\leq
  \frac{2x\sqrt{x}}{\pi T|h|}\Big[\Big(1+\frac{2.89}{T}\Big)W_\K(T)
                                   + \Big(1
                                          + \frac{18.61}{T}
                                     \Big)n_\K
                                   + \frac{17.31}{T}
                             \Big]                                            \\
  + \frac{T|h|}{2\pi\sqrt{x}}\Big(1+\frac{|h|}{3x}\Big)\Big[\Big(1+\frac{1.45}{T}\Big)W_\K(T)
                                                            - \Big(1 - \frac{8.93}{T}\Big)n_\K
                                                            + \frac{8.66}{T}
                                                      \Big].
\end{multline*}
The situation is the same, thus we similarly take $h=-\frac{2x}{T}$ (we then have $x+h>1$ if $T\geq 5$),
producing
\begin{multline*}
\frac{\pi}{\sqrt{x}}
\Big|\sum_{\rho}\frac{(x+h)^{\rho+1}-x^{\rho+1}}{h\rho(\rho+1)}
     -\sum_{|\gamma|<T}\frac{x^\rho}{\rho}\Big|
\leq
  \Big[\Big(1+\frac{2.89}{T}\Big)W_\K(T)
       + \Big(1 + \frac{18.61}{T}\Big)n_\K
       + \frac{17.31}{T}
  \Big]                                                                       \\
 + \Big(1+\frac{2}{3T}\Big)\Big[\Big(1+\frac{1.45}{T}\Big)W_\K(T)
                                - \Big(1 - \frac{8.93}{T}\Big)n_\K
                                + \frac{8.66}{T}
                           \Big]
\end{multline*}
and after some simplifications we get
\begin{multline}\label{eq:E27}
\frac{\pi}{\sqrt{x}}
  \Big|\sum_{\rho}\frac{(x+h)^{\rho+1}-x^{\rho+1}}{h\rho(\rho+1)}
       -\sum_{|\gamma|<T}\frac{x^\rho}{\rho}
  \Big|                                                                       \\
\leq
   \Big[2
        +\frac{5.01}{T}
        +\frac{0.97}{T^2}
   \Big]W_\K(T)
   + \Big[\frac{26.88}{T}
         +\frac{5.96}{T^2}
     \Big]n_\K
   + \frac{25.97}{T}
   + \frac{5.78}{T^2}.
\end{multline}
%
Let $M_{W,\pm}(T)$, $M_{n,\pm}(T)$ and $M_{c,\pm}(T)$ be the functions of $T$ such that the right-hand
side of \eqref{eq:E26} and \eqref{eq:E27} respectively are
\begin{align*}
&M_{W,+}(T)W_\K(T) + M_{n,+}(T) n_\K + M_{c,+}(T)                                     \\
&M_{W,-}(T)W_\K(T) + M_{n,-}(T) n_\K + M_{c,-}(T),
\end{align*}
and let their differences be denoted as
\begin{align*}
D_W(T)    &:=M_{W,+}(T)-M_{W,-}(T)
            =  \frac{0.83}{T} + \frac{4.55}{T^2}                                      \\
D_n(T)    &:=M_{n,+}(T)-M_{n,-}(T)
            =  \frac{2.16}{T}
             + \frac{25.50}{T^2}                                                      \\
D_c(T)    &:=M_{c,+}(T)-M_{c,-}(T)
            = \frac{22.79}{T^2}.
\end{align*}
By~(\ref{eq:E24}--\ref{eq:E27}) we have
\begin{multline}\label{eq:E28}
\Big|\psi_\K(x)-x-\sum_{|\gamma|<T}\frac{x^\rho}{\rho}\Big|
\leq \frac{\sqrt{x}}{\pi}\big(M_{W,+}(T)W_\K(T) + M_{n,+}(T) n_\K + M_{c,+}(T)\big)   \\
     + \frac{x}{T}
     + |r_\K|
     + 1.22\frac{\delta_{n_\K\leq 2}}{x}
     + \max\Big(0,d_\K\log x - \frac{\sqrt{x}}{\pi}\big(D_{W}(T)W_\K(T) + D_{n}(T) n_\K + D_{c}(T)\big)\Big).
\end{multline}
The last term is bounded by $\epsilon_\K(x,T)$, since $D_{c}(T)$ is positive and
$\frac{1}{n_\K}D_{W}(T)W_\K(T) + D_{n}(T)\geq 1.44\pi/T$ when $T\geq 5$.
%
Moreover, by~\eqref{eq:E23} we have
\begin{equation}\label{eq:E29}
\Big|\sum_{\substack{\rho\\|\gamma|<T}}\frac{x^\rho}{\rho}\Big|
\leq
\frac{\sqrt{x}}{\pi}\Big[
  \Big(\log\Big(\frac{T}{2\pi}\Big) + 4.01\Big)\log \disc_\K
+ \Big(\frac{1}{2}\log^2\Big(\frac{T}{2\pi}\Big) - 1.41\Big)n_\K
+ 25.57
\Big]
\end{equation}
for $T\geq 5$, thus the claim follows from~\eqref{eq:E28}, \eqref{eq:E29} and the upper bound for
$|r_\K|$ in~\eqref{eq:E14}.
\end{proof}

\begin{proof}[Proof of Corollary~\ref{cor:E1}]
When $\K=\Q$ the claim is weaker than~\eqref{eq:E1}, thus from now on we can assume that $n_\K\geq 2$.
Let the claim in the corollary be written as
\[
|\psi_\K(x)-x| \leq F_{c,\disc}(x)\log\disc_\K + G_{c,n}(x)n_\K.
\]
We prove that for every field the bound coming from the theorem is smaller than the one in the
corollary; i.e., that
\begin{equation}\label{eq:E30}
(F_{c,\disc}(x) - F(x,T))\log\disc_\K + (G_{c,n}(x) - G(x,T))n_\K - H(x,T) \geq 0
\end{equation}
for every $x \geq 100$ and some choice for $T=T(x)$. We set $T(x)=\frac{c\sqrt{x}}{\log x}$ with
$c\in[4.8,8]$ (in this way $T\geq 5$ for every $x\geq 36$). An elementary argument proves that the
left-hand side in~\eqref{eq:E30} is $\sqrt{x}$ times a function which increases in $x$ when $x\geq 100$.
\begin{proof}
Dividing the left-hand side of~\eqref{eq:E30} by $\frac{\sqrt{x}}{\pi}$ we get
{\small
\begin{multline*}
\Big[\frac{1}{2}\log x -\Big[\log\Big(\frac{T}{2\pi}\Big)
                             + 6.01
                             + \frac{5.84}{T}
                             + \frac{5.52}{T^2}
                        \Big]
     + \frac{0.98\pi}{\sqrt{x}}
\Big] \log\disc_\K\\
+ \Big[\frac{1}{8}\log^2 x - \Big[\frac{1}{2}\log^2\Big(\frac{T}{2\pi}\Big)
               + \Big(2 + \frac{5.84}{T} + \frac{5.52}{T^2}\Big)\log\Big(\frac{T}{2\pi}\Big)
               - 1.41
               + \frac{29.04}{T}
               + \frac{31.46}{T^2}
          \Big] + \frac{4.10\pi}{\sqrt{x}} - \frac{\pi}{n_\K\sqrt{x}}H(x,T)
  \Big] n_\K,
\end{multline*}
}

\noindent
whose derivative is
{\small
\begin{multline*}
\Big[\frac{1}{2x} - \Big[\log\Big(\frac{T}{2\pi}\Big)
                         + 6.01
                         + \frac{5.84}{T}
                         + \frac{5.52}{T^2}
                    \Big]'
     - \frac{0.49\pi}{x\sqrt{x}}
\Big] \log\disc_\K\\
+
\Big[\frac{\log x}{4x} - \Big[\frac{1}{2}\log^2\Big(\frac{T}{2\pi}\Big)
                           + \Big(2 + \frac{5.84}{T} + \frac{5.52}{T^2}\Big)\log\Big(\frac{T}{2\pi}\Big)
                           - 1.41
                           + \frac{29.04}{T}
                           + \frac{31.46}{T^2}
                      \Big]'
     - \frac{2.05\pi}{x\sqrt{x}}
     - \Big(\frac{\pi}{n_\K}\frac{H(x,T)}{\sqrt{x}}\Big)'
\Big] n_\K.
\end{multline*}
}

\noindent
Since $T'>0$ for $x\geq e^2$, the function $-\frac{\log(T/2\pi)}{T^2}$ is increasing for
$\frac{\sqrt{x}}{\log x} \geq \frac{2\pi\sqrt{e}}{c}$, and since $c\in[4.8,8]$, it is satisfied for every
$x\geq 100$. Moreover,
\[
-\pi\frac{\epsilon_\K(x,T)}{n_\K\sqrt{x}}
 = -\pi\max\Big(0,\frac{d_\K}{n_\K}-\frac{1.44}{c}\Big)\frac{\log x}{\sqrt{x}}
\]
increases for $x\geq e^2$ for every combination of $d_\K, n_\K$ and $c$. Thus, removing some increasing
terms it is sufficient to prove that
%
%
%
{\small
\begin{multline*}
\Big[\frac{1}{2x}
     - \Big(\log\Big(\frac{T}{2\pi}\Big)\Big)'
     - \Big(\frac{5.84}{T}\Big)'
     - \frac{0.49\pi}{x\sqrt{x}}
\Big] \log\disc_\K
 +
\Big[\frac{\log x}{4x}
     - \frac{1}{2}\Big(\log^2\Big(\frac{e^2T}{2\pi}\Big)\Big)'
     - \Big(\frac{\pi}{n_\K}\frac{\sqrt{x}}{T}\Big)'\\
     - 5.84\Big(\frac{1}{T}\log\Big(\frac{T}{2\pi}\Big)\Big)'
     - \Big(\frac{29.04}{T}\Big)'
     - \frac{2.05\pi}{x\sqrt{x}}
\Big] n_\K
\geq 0,
\end{multline*}
}
\noindent
which after some computations becomes



%
{\small
\begin{multline*}
\Big[\frac{2}{\log x} + \frac{5.84}{c}\frac{\log x-2}{\sqrt{x}} - \frac{0.98\pi}{\sqrt{x}}
\Big] \log\disc_\K\\
 + \Big[1
        - \frac{2\pi}{c n_\K}
        - \log\Big(\frac{c e^2}{2\pi\log x}\Big)\Big(1-\frac{2}{\log x}\Big)
        + \frac{5.84}{c\sqrt{x}}\log\Big(\frac{c\sqrt{x}}{2\pi e\log x}\Big)(\log x - 2)
        + \frac{29.04}{c}\frac{\log x-2}{\sqrt{x}}
        - \frac{4.10\pi}{\sqrt{x}}
   \Big] n_\K
\geq 0.
\end{multline*}
}
\noindent Recalling the restriction $c\in[4.8,8]$, one proves that both the coefficient of $\log\disc_\K$
and of $n_\K$ are positive for all $x\geq 12$.
\end{proof}

We further notice that the coefficients of $\log\disc_\K$ and of $n_\K$ in~\eqref{eq:E30} are positive
when $x\geq 100$. In fact they can be written as $\sqrt{x}$ times a monotonous function of $x$ (repeating
the previous argument, this time without the contribution of $H(x,T)$), and their value in $x=100$ is
positive for every $c\in[4.8,8]$. Now we split the argument according to the value of $n_\K$.
\medskip\\
$n_\K\geq 8$.          %
   We are assuming GRH, so $\log\disc_\K \geq n_\K\log(11.916) - 5.8507$ (see~\cite{Odlyzko1,
   OdlyzkoTables,Odlyzko2, Odlyzko3,Odlyzko4} and entry $b=1.6$ of Table~3 in~\cite{OdlyzkoTables}). Thus
   we can prove the claim by proving that
   \[
    (F_{c,\disc}(x) - F(x,T))(n_\K\log(11.916) - 5.8507) + (G_{c,n}(x) - G(x,T))n_\K - H(x,T) \geq 0,
   \]
   and since the coefficient of $n_\K$ is positive, it is sufficient to prove it for $n_\K=8$.
   We set $c=8$. We have verified that the left-hand side is $\sqrt{x}$ times an increasing function (for
   $x\geq 100$), thus the inequality can be proved for every $x\geq 100$ simply by testing its value in
   $x=100$.
\medskip\\
$n_\K=5$, $6$ and $7$. %
   We repeat the previous argument, but now with the minimal discriminants which are $1609$, $9747$ and
   $184607$, respectively (see~\cite[Table~1]{Odlyzko4}).
\medskip\\
$2\leq n_\K\leq 4$.    %
   For every such degree one checks that~\eqref{eq:E30} holds when $\disc_\K > \overline{\disc_\K}$ where
   $\overline{\disc_\K}$ is in Table~\ref{tab:E1} (by monotonicity in $x$ it is sufficient to check the
   claim for $x=100$); we adjust the parameter $c$ to get a smaller $\overline{\disc_\K}$.
   \begin{table}[H]
   \caption{Minimal discriminants $\overline{\disc_\K}$ for~\eqref{eq:E30}.}\label{tab:E1}
   \smallskip
    \begin{tabular}{l|r|r|r|r|r|r}
      $r_2\backslash n_\K$ & $2$ ($c=4.8$) & $3$ ($c=5.1$)           & $4$ ($c=6$) \\
      \midrule
                       $0$ & $172921407$   & \phantom{$00$}$1350275$ & $10311$     \\
                       $1$ & $103995324$   & $ 369421$               & $ 2584$     \\
                       $2$ &               &                         & $  648$     \\
    \end{tabular}
   \end{table}
   \noindent %
   This proves the claim for all fields but those with $n_\K \leq 4$ and $\disc_\K\leq
   \overline{\disc_\K}$. Actually, all fields with small degree and small discriminants are
   known~\cite{MegrezTables}
   (for quadratic fields we use the fundamental discriminants below $\overline{\disc_\K}$), and the number
   of these exceptions is in Table~\ref{tab:E2}.
   \begin{table}[H]
   \caption{Number of exceptional fields for~\eqref{eq:E30}.}\label{tab:E2}
   \smallskip
    \begin{tabular}{l|r|r|r|r|r|r}
      $r_2\backslash n_\K$ & $2$        & $3$                    & $4$                    \\
      \midrule
                       $0$ & $52561764$ & \phantom{$000$}$74747$ & \phantom{$000000$}$54$ \\
                       $1$ & $31610787$ & $65708$                & $73$                   \\
                       $2$ &            &                        & $22$                   \\
    \end{tabular}
   \end{table}
   \noindent %
   For each exceptional field we come back to~\eqref{eq:E30} and prove it for every $x\geq \bar{x}$ in
   Table~\ref{tab:E3} (using again the monotonicity in $x$); we adjust the parameter $c$ to get a
   smaller $\bar{x}$.
   \begin{table}[H]
   \caption{Minimal $x$ for the exceptional fields for~\eqref{eq:E30}; the minimal discriminants come
   from~\cite[Table~1]{Odlyzko4}; $\bar{x}$ is the one associated with the smallest discriminant.}
   \label{tab:E3}
   \smallskip
    \begin{tabular}{l|r|r|r|r|r|r}
      $n_\K$             & $2$ ($c= 4.8$)& $3$ ($c=5$)           & $4$ ($c=5$)           \\
      \midrule
      minimal $\disc_\K$ & $3$           & \phantom{$0000$}$23 $ & \phantom{$0000$}$117$ \\
      $\bar{x}$          & $1566020$     & $980$                 & $184$                 \\
    \end{tabular}
   \end{table}
   \noindent%
   At last we test the claim for the exceptional fields in the exceptional range in Table~\ref{tab:E3} by
   computing $|\psi_\K(x)-x|$ (with PARI/GP~\cite{PARI2}) and by checking that the difference with the
   bound is at least $1$: in this way we only need to check the integers $x$ in the range. This idea
   works for the fields in our list of degree $3$ and $4$. For quadratic fields both the number of fields
   and $\bar{x}$ are much larger. Luckily, the value of $\bar{x}$ drops down quickly when the
   discriminant increases, and for discriminants larger that $100$ it is already only $5040$, which can
   be checked very fast. Therefore the really long computations are only those for quadratic fields with
   discriminants below $100$. The entire check can be made in approximately 40 hours on a 2011 personal
   computer.
\end{proof}

\begin{proof}[Proof of Corollary~\ref{cor:E2}]
In~\eqref{eq:E4} we make the choice $T = \frac{10}{e} \frac{\sqrt{x}}{\log x}$, for which the condition
$T\geq 5$ is satisfied for every $x\geq 3$. The term $\epsilon_\K(x,T)$ in Theorem~\ref{th:E1} is $\leq
0.61 d_\K\log x$, and
\begin{align*}
F(x,T) \leq& \frac{\sqrt{x}}{\pi}\Big[\frac{1}{2}\log\Big(x \frac{25e^2}{\pi^2}\frac{e^{\frac{11.68}{T} + \frac{11.04}{T^2}}}{\log^2 x}\Big)
                                      + 4.01
                                 \Big]
              + 1.02,                                                                                    \\
%
%
G(x,T) \leq& \frac{\sqrt{x}}{\pi}\Big[\frac{1}{8}\log^2\Big(x\frac{25e^2}{\pi^2}\frac{e^{\frac{11.68}{T}+\frac{11.04}{T^2}}}{\log^2 x}\Big)
                                      - 3.41
                                      + \frac{17.36}{T}
                                      + \frac{3.37}{T^2}
                                      - \frac{32.23}{T^3}
                                      - \frac{15.23}{T^4}
                                  \Big]
             -2.10,                                                                                       \\
%
%
H(x,T) \leq& \frac{e}{10} \sqrt{x}\log x
          + 25.57\frac{\sqrt{x}}{\pi}
      + 0.61d_\K\log x
          + 2.75\log x
          + 8.76.
\end{align*}
The first claim in Corollary~\ref{cor:E2} follows plugging these bounds in~\eqref{eq:E3}, after some simplifications.
%
\noindent
For the second inequality we set $T = \frac{2\pi}{e^2}\frac{\sqrt{x}}{\log x}$; in this case the term
$\epsilon_\K(x,T)$ in Theorem~\ref{th:E1} is $0$,
the condition $T\geq 5$ requires $x\geq 2000$, and the claim follows as the previous one.
\end{proof}

\begin{proof}[Proof of Corollary~\ref{cor:E3}]
Let
\[
\vartheta_\K(x) := \sum_{\substack{\P\\ \Norm\P \leq x}} \log\Norm\P.
\]
Then one has
\[
\pi_\K(x)-\pi_\K(\bar{x})- \int_{\bar{x}}^x \frac{\d u}{\log u}
 =    \int_{\bar{x}}^x \frac{\d(\vartheta_\K(u)-u)}{\log u},
\]
which by partial integration gives
\begin{equation}\label{eq:E31}
\Big|\pi_\K(x)-\pi_\K(\bar{x})- \int_{\bar{x}}^x \frac{\d u}{\log u}\Big|
 \leq \int_{\bar{x}}^x \frac{\d|\vartheta_\K(u)-u|}{\log u}
 \leq \frac{|\vartheta_\K(x)-x|}{\log x} + \int_{\bar{x}}^x \frac{|\vartheta_\K(u)-u|\d u}{u\log^2 u}.
\end{equation}
Moreover, there are at most $n_\K$ ideals of the form $\P^m$ ($\P$ prime) of a given norm in $\K$, so
\[
|\psi_\K(x)-\vartheta_\K(x)|
\leq n_\K |\psi_\Q(x)-\vartheta_\Q(x)|
\leq 1.43\,n_\K \sqrt{x},
\]
where the last inequality is Theorem~13 in~\cite{RosserSchoenfeld}.
%
This shows that $\vartheta_\K(x)$ satisfies the same bound of $\psi_\K(x)$, at the cost of adding $1.43
n_\K \sqrt{x}$. Substituting this bound and the first inequality in Corollary~\ref{cor:E2}
into~\eqref{eq:E31} and after some numerical approximations one gets the corollary.
\end{proof}

\begin{table}[H]
\caption{Constants for $\sum_{|\gamma|\leq 5} |\rho|^{-1}$ in Lemma~\ref{lem:E3}.}\label{tab:E4}
\smallskip
{\scriptsize
\begin{tabular}{|r|r||r|r|}
  \toprule
  $j$     & \mltc{1}{c||}{$a_j\cdot 10^7$} &  $j$      &  \mltc{1}{c|}{$a_j\cdot 10^7$}   \\
  \midrule
  $ 1$    & $                    -324328089$&  $25$    & $ -52154912212245427675107284117$\\
  $ 2$    & $                  115693093357$&  $26$    & $  72227309752304735434420743120$\\
  $ 3$    & $               -10579381239203$&  $27$    & $ -91546659026910381192366828396$\\
  $ 4$    & $               495540769876127$&  $28$    & $ 106117853961289012764032450733$\\
  $ 5$    & $            -14528281352885983$&  $29$    & $-112369546004525999862866475251$\\
  $ 6$    & $            296347058332550155$&  $30$    & $ 108533470948598920563558219043$\\
  $ 7$    & $          -4498154499661073603$&  $31$    & $ -95431698456287244651252772381$\\
  $ 8$    & $          53248447239339829090$&  $32$    & $  76206788473674179730998288621$\\
  $ 9$    & $        -508947342104081739447$&  $33$    & $ -55105812322315804526845019881$\\
  $10$    & $        4033084416071505510477$&  $34$    & $  35955970546002972861665837368$\\
  $11$    & $      -27051470635668143949707$&  $35$    & $ -21079935102298710141936369413$\\
  $12$    & $      156121546937577920978167$&  $36$    & $  11047616237574616067334355219$\\
  $13$    & $     -785529078417852387859619$&  $37$    & $  -5143709248575449263188160534$\\
  $14$    & $     3482495472267374521416188$&  $38$    & $   2111566552644017238627810350$\\
  $15$    & $   -13720533216155265613103988$&  $39$    & $   -757162365842762640320305866$\\
  $16$    & $    48375037637788872322025183$&  $40$    & $    234379624034767935847527151$\\
  $17$    & $  -153492067547835461489301521$&  $41$    & $    -61692234538384117080736694$\\
  $18$    & $   440289327629182231371781424$&  $42$    & $     13534020670767148307863583$\\
  $19$    & $ -1145934878685670756527108765$&  $43$    & $     -2407266538638620726296042$\\
  $20$    & $  2713965041058219158192688004$&  $44$    & $       333452115133845423979326$\\
  $21$    & $ -5861973594145453618923885659$&  $45$    & $       -33740880236473501034280$\\
  $22$    & $ 11566694720865120123031709900$&  $46$    & $         2218003445878553284287$\\
  $23$    & $-20874589384842483010331503670$&  $47$    & $          -71076474624305025203$\\
  $24$    & $ 34482298986730410055952580804$&  ---     & \mltc{1}{c|}{---}                \\
  \bottomrule
\end{tabular}
}
\end{table}


\end{document}